\newtheorem{thm}{Theorem}[section]
\newtheorem{lemma}[thm]{Lemma}
\newtheorem{cor}[thm]{Corollary}
\newtheorem{prop}[thm]{Proposition}
\newtheorem{conjj}[thm]{Conjecture}
\newtheorem{defi}[thm]{Definition}
\theoremstyle{definition}
\newtheorem{rem}[thm]{Remark}
\newtheorem{exm}[thm]{Example}
\def\bZ{{\mathbb Z}}
\def\ovl#1{\overline{#1}}
\def\res{\operatorname{Res}\nolimits}
\def\Hom{\operatorname{Hom}\nolimits}
\def\End{\operatorname{End}\nolimits}
\def\Ker{\operatorname{Ker}\nolimits}
\def\proj{\operatorname{(proj)}\nolimits}
\def\stmod{\operatorname{stmod}\nolimits}
\def\SL{\operatorname{SL}\nolimits}
\def\PSL{\operatorname{PSL}\nolimits}
\def\SP#1#2{\operatorname{Sp}\nolimits}
\def\conj{\operatorname{conj}\nolimits}
\def\stmod{\operatorname{stmod}\nolimits}
\def\bZ{{\mathbb Z}}
\def\bfq{{\mathbb F}_q}
\def\dmd{\diamondsuit}
\newcommand{\ls}[2]{{}^{{#1}}\!{{#2}}}
\begin{document}

\title[The torsion group of endotrivial modules]
{The torsion group of endotrivial modules} 

\author{Jon F. Carlson$^*$}\thanks{$^*$~Partially supported by a grant from NSF}
\author{Jacques Th\'evenaz}
\date\today

\subjclass[2000]{Primary 20C20}

\begin{abstract}
Let $G$ be a finite group and let $T(G)$ be the abelian group of 
equivalence classes of endotrivial $kG$-modules, where $k$ is an 
algebraically closed field of characteristic~$p$. We determine, in
terms of the structure of $G$, 
the kernel of the restriction map from $T(G)$ to $T(S)$, 
where $S$ is a Sylow
$p$-subgroup of~$G$, in the case when $S$ is abelian. 
This provides a classification of all
torsion endotrivial $kG$-modules in that case.
\end{abstract}

\maketitle

%%%%%%%%%%%%%  section 1  %%%%%%%%%%

\section{Introduction} \label{sec:intro} 

\noindent
Endotrivial modules for a finite group~$G$ over a field $k$ of 
prime characteristic~$p$ play a significant role
in modular representation theory. Among other things, they form an
important part of the Picard group of self-equivalences
of the stable category $\stmod(kG)$ of finitely generated $kG$-modules
modulo projectives. They are modules which have universal deformation
rings \cite{SL}. The endotrivial modules  have been 
classified in case $G$ is a $p$-group (\cite{CT1, CT2})
and various results have appeared since 
for some specific families of groups
\cite{CHM, CMN1, CMN2, CMN3, CMT1, CMT2, CMT3, Ma, MT, NR, LM2}.
Recently, another line of research has developed that is concerned with the 
classification of all endotrivial modules which are simple
\cite{Ro, LMS, LM1}.

The main purpose of this paper is to classify all torsion 
endotrivial modules when a Sylow $p$-subgroup is abelian.
We let $G$ be a finite group and $T(G)$ be the abelian group 
of  equivalence classes of endotrivial $kG$-modules,
where $k$ is an algebraically closed field of characteristic~$p$.
Let $S$ be a Sylow $p$-subgroup of~$G$. We fix the notation
\[
K(G)=\Ker \big\{\res_S^G : T(G) \longrightarrow T(S) \, \big\} \,.
\]
One of the main open questions is to describe this kernel 
explicitly and we achieve this goal here in the case that $S$ is abelian.
Actually, $K(G)$ is known to be equal to 
the torsion subgroup of $T(G)$ in most cases.
Specifically, this happens whenever $S$ is not cyclic, 
generalized quaternion, or semi-dihedral
(because, if we exclude these three cases, 
then $T(S)$ is torsion-free by~\cite{CT2}).
The excluded cases are treated in~\cite{MT}, \cite{Kaw}  and \cite{CMT2}.

Let $N = N_G(S)$ denote the normalizer of a Sylow $p$-subgroup $S$ of~$G$.
It is known that the restriction map 
$\res_{N}^G : K(G) \longrightarrow K(N)$ is injective,
induced by the Green correspondence, 
and the problem is to describe its image.
In fact, $K(N)$ consists of all 
one-dimensional representations of~$N$
and the main difficulty is to know which of 
them lie in the image of $\res_{N}^G$.
That is, which of them have Green 
correspondents that are endotrivial.
Indeed, if $J$ is the intersection of the kernels of the 
one-dimensional $kN$-modules whose Green correspondents
are endotrivial, then $K(G)$ is isomorphic to the dual
group $(N/J)^*$. So the problem of finding 
$K(G)$ comes down to the question of what is~$J$.

Another approach was introduced by Balmer in \cite{Ba1},
in which he shows that $K(G)$ is isomorphic
to the group $A(G)$ of all weak homomorphisms $G\to k^\times$
(defined below in Section~\ref{sec:weak}). Balmer's method was used
effectively in \cite{CMN3} to compute $K(G)$ in some crucial
cases for $G$ a special linear group. The method involved
the construction of a system of local subgroups $\{\rho^i(Q)\}$
indexed on the collection of nontrivial subgroups $Q$ of the 
Sylow subgroup $S$ of $G$ and for $i \geq 1$
(see Section~\ref{sec:local}, below for the definition). 
These subgroups are in the kernels of
all weak homomorphisms and we show here that $\rho^i(S) \subseteq
N_G(S)$ is in the kernel of all one-dimensional representations of~$N_G(S)$
whose Green correspondents are endotrivial modules. That is, 
$\rho^i(S) \subseteq J$, for $J$ defined as above. 

Hence, the question of determining $K(G)$ becomes: is $J$ equal to 
$\rho^\infty(S)$, the limit of the system? The answer is yes in all
examples that we know. The main theorem of this paper says that 
the answer is yes whenever the Sylow $p$-subgroup of $G$ is abelian. 
Indeed, we prove more. We show that $J = \rho^2(S)$ the subgroup of 
$N = N_G(S)$ generated by the commutator subgroup $[N,N]$, by~$S$
and by all intersections $N \cap [N_G(Q), N_G(Q)]$ for $Q$ a nontrivial
subgroup of~$S$. Thus, $K(G) \cong (N/\rho^2(S))^*$ in the case that 
$S$ is abelian. Balmer's characterization of $K(G)$ in terms of weak
homomorphism is crucial for the proof, which appears in Section~\ref{sec:mainthm}.

In Section~\ref{sec:cyclic} we show that the main theorem can be used
to describe $K(G)$ explicitly when $S$ is cyclic. 
An extension of the main theorem
to the case where the normalizer $N$ of $S$ controls $p$-fusion is 
given in Section~\ref{sec:fusion}. The paper ends with some examples
of simple or almost simple groups where the subgroup $K(G)$ is not 
trivial.

\smallskip
\noindent
{\em Acknowledgment}~: The authors are grateful 
to Paul Balmer and Caroline Lassueur for several useful discussions.

%%%%%%%%%%%  section 2  %%%%%%%%

\section{Endotrivial modules and the restriction to the 
Sylow subgroup} \label{sec:prelim}

\noindent
Throughout this paper, $k$ denotes an algebraically 
closed field of prime 
characteristic~$p$ and $G$ is a finite group.
We assume that all modules are finitely generated.
If $M$ and $L$ are $kG$-modules, the notation 
$M\cong L \oplus \proj$ means
that $M$ is isomorphic to the direct sum of 
$L$ with some projective $kG$-module, which might be zero.
We write $k$ for the trivial $kG$-module. 
Unless otherwise specified, the symbol $\otimes$
is the tensor product $\otimes_k$ of the underlying vector spaces.
In  the case that $M$ and $L$ are $kG$-modules, the tensor product 
$M \otimes L$ is a $kG$-module with $G$ acting by the diagonal 
action on the factors.

We assume that $G$ has order divisible by~$p$ 
and we let $S$ be a Sylow $p$-subgroup of~$G$.
Recall that a $kG$-module $M$ is endotrivial provided 
its endomorphism algebra $\End_k(M)$ is isomorphic (as a $kG$-module)
to the direct sum of the trivial module~$k$ and a projective $kG$-module.
In other words, $M$ is endotrivial if and 
only if $\Hom_k(M,M) \cong M^* \otimes M \cong k \oplus \proj$,
where $M^*$ denotes the $k$-dual of~$M$.
Any endotrivial module $M$ splits as the direct sum $M=M_0 \oplus \proj$
for an indecomposable endotrivial $kG$-module 
$M_0$, which is unique up to isomorphism.
We let $T(G)$ be the set of equivalence classes 
of endotrivial $kG$-modules for the equivalence relation
$$M \sim L \Longleftrightarrow M_0 \cong L_0 \,.$$
Every equivalence class contains a unique 
indecomposable module up to isomorphism.
The tensor product induces an abelian group structure on the set $T(G)$,
written additively as $[M]+[L] = [M \otimes L]$.
The zero element of $T(G)$ is the class $[k]$ of the trivial module,
while the inverse of the class of a module 
$M$ is the class of the dual module~$M^*$.
The group $T(G)$ is known to be a finitely generated abelian group.
The following result is Lemma 2.3 in~\cite{CMT1}.

\begin{lemma} \label{lem:K(G)}
Let $K(G)$ be the kernel of the restriction map 
$\res_S^G : T(G) \longrightarrow T(S)$ to a Sylow $p$-subgroup~$S$.
\begin{itemize}
\item[(a)] $K(G)$ is a finite subgroup of~$T(G)$.
\item[(b)] $K(G)$ is the entire torsion subgroup $TT(G)$ of~$T(G)$,
provided $S$ is not cyclic, generalized quaternion, or semi-dihedral.
\end{itemize}
\end{lemma}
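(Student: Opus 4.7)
The plan is to reduce statement~(a) to the normalizer $N=N_G(S)$ via the Green correspondence, and to derive~(b) from the $p$-group classification of $T(S)$ in~\cite{CT2}. Let $M$ represent a class in $K(G)$ and let $M_0$ be its unique indecomposable non-projective summand, so that $\res_S^G M_0\cong k\oplus\proj$. A Mackey decomposition forces $M_0$ to have vertex exactly~$S$: a strictly smaller vertex would express $\res_S^G M_0$ as a sum of modules induced from proper subgroups of~$S$, none of which can contribute the trivial $kS$-module as a summand. Thus $M_0$ lies in the domain of the Green correspondence, and its correspondent, call it $f(M_0)$, is an indecomposable $kN$-module of vertex~$S$. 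Compatibility of the correspondence with tensor products and duals modulo projectives shows that $f(M_0)$ is endotrivial, while the formula $\res_N^G M_0\cong f(M_0)\oplus Y$ (with $Y$ a sum of indecomposables of vertex strictly smaller than~$S$), followed by restriction to~$S$, gives $\res_S^N f(M_0)\cong k\oplus\proj$. Hence $[M_0]\mapsto[f(M_0)]$ defines an injection $K(G)\hookrightarrow K(N)$.

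Next I would show $K(N)$ is finite. Letting $W$ be an indecomposable $kS$-source of $f(M_0)$, Mackey (using $S\trianglelefteq N$) gives $\res_S^N\Ind_S^N W\cong\bigoplus_{n\in N/S}{}^nW$, so the condition $\res_S^N f(M_0)\cong k\oplus\proj$ forces $W\cong k$. Hence $f(M_0)$ is a direct summand of the permutation module $\Ind_S^N k$. Since $S$ acts trivially on $N/S$ and $N/S$ is a $p'$-group, this permutation module is inflated from the semisimple regular module $k[N/S]$, whose indecomposable summands are the inflations of the irreducible $k[N/S]$-modules; the constraint that the restriction to~$S$ remain $k\oplus\proj$ then forces the corresponding irreducible to be one-dimensional. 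Therefore $f(M_0)\cong k_\chi$ for some linear character $\chi\in\Hom(N/S,k^\times)$, and this finite group bounds $|K(N)|$ and hence $|K(G)|$, proving~(a).

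For~(b), the classification in~\cite{CT2} shows that $T(S)$ is torsion-free whenever $S$ is not cyclic, generalized quaternion, or semi-dihedral. Under that hypothesis, $\res_S^G:T(G)\to T(S)$ annihilates every torsion element of $T(G)$, so $TT(G)\subseteq K(G)$; the reverse containment is immediate from~(a), since any finite subgroup of an abelian group consists of torsion elements.

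The main obstacle I expect is the careful bookkeeping of projective summands throughout the Green-correspondence argument: verifying endotriviality of $f(M_0)$, identifying its source as the trivial module, and pinning down the dimension of the associated $k[N/S]$-irreducible. Each of these steps requires a Mackey or vertex computation together with an appeal to Krull--Schmidt, though none is individually deep.
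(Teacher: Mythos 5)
Your argument is correct and follows the standard route. The paper does not prove this lemma itself (it cites Lemma~2.3 of \cite{CMT1}), but that proof and the auxiliary Propositions~\ref{prop:normal1} and~\ref{prop:Green} in the present paper proceed exactly as you do: injectivity of $\res_N^G$ on $K(G)$ via the Green correspondence, the trivial-source and inflation argument reducing $K(N)$ to one-dimensional $kN$-modules, and the torsion-freeness of $T(S)$ from \cite{CT2} for part~(b).
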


We say that a $kG$-module $M$ has trivial Sylow restriction, 
if the restriction of $M$ to a Sylow $p$-subgroup $S$
has the form $M{\downarrow_S^G} \cong k \oplus \proj$. 
Any such module is endotrivial and is the direct sum
of an indecomposable trivial source module and a projective module. 
Thus $M$ has trivial Sylow restriction
if and only if its class $[M]$ belongs to~$K(G)$.

\begin{prop} \label{prop:normal1}
Suppose that a finite group $H$ has a nontrivial normal $p$-subgroup.
Then every indecomposable $kH$-module with 
trivial Sylow restriction has dimension one.
\end{prop}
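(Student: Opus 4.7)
My plan is to combine vertex analysis with a Mackey restriction to the normal $p$-subgroup. Let $P$ be a nontrivial normal $p$-subgroup of $H$. Since $P$ is a normal $p$-subgroup, it is contained in every Sylow $p$-subgroup; in particular $P\leq S$. Let $M$ be an indecomposable $kH$-module with $M{\downarrow}_S^H\cong k\oplus\proj$. Since the trivial summand is nonzero, $M$ is not projective, hence by the discussion preceding the proposition $M$ is an indecomposable trivial source $kH$-module. Pick a vertex $Q\leq S$, so that $M$ is a direct summand of $\Ind_Q^H k$.

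The first step is to show that $Q=S$. By the Mackey formula,
$$\Ind_Q^H k\;{\downarrow}_S^H \;\cong\; \bigoplus_{g\in S\backslash H/Q}\Ind_{S\cap\ls{g}{Q}}^S k,$$
and Krull--Schmidt forces the trivial summand $k$ of $M{\downarrow}_S^H$ to lie inside some $\Ind_R^S k$ with $R=S\cap\ls{g}{Q}$. A direct computation using Frobenius reciprocity (the composition of the canonical injection $k\hookrightarrow\Ind_R^S k$ with the canonical surjection $\Ind_R^S k\twoheadrightarrow k$ is multiplication by $[S:R]$) shows that $k\mid\Ind_R^S k$ only when $[S:R]$ is invertible in $k$; since $S$ is a $p$-group and $\operatorname{char}k=p$, this forces $R=S$, hence $S\leq\ls{g}{Q}$. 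Thus $Q$ is $H$-conjugate to $S$, and after replacing $Q$ by this conjugate we may assume $M\mid\Ind_S^H k$.

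The second step is to restrict further to $P$. Because $P\trianglelefteq H$ and $P\leq S$, for every $g\in H$ one has $P=\ls{g}{P}\leq\ls{g}{S}$, so $P\cap\ls{g}{S}=P$, and Mackey collapses to
$$\Ind_S^H k\;{\downarrow}_P^H \;\cong\; \bigoplus_{g\in P\backslash H/S}\Ind_P^P k \;\cong\; k^{[H:S]}.$$
Therefore $M{\downarrow}_P^H$ is a direct sum of trivial $kP$-modules, i.e.\ $P$ acts trivially on $M$. On the other hand, restricting $M{\downarrow}_S^H$ down to $P$ gives $M{\downarrow}_P^H\cong k\oplus(\text{free }kP\text{-module})$, since any projective $kS$-module restricts to a free $kP$-module. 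Because $P\neq 1$, a nonzero free $kP$-module is not a sum of trivial modules, so the free part must vanish. Hence $M{\downarrow}_P^H\cong k$ and in particular $\dim_k M=1$.

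The main obstacle is the vertex identification in the first step; the elementary observation that $k\mid\Ind_R^S k$ forces $R=S$ (for $S$ a $p$-group in characteristic $p$) is the essential input. Once the vertex is pinned down as $S$, the normality of $P$ in $H$ collapses both Mackey decompositions in exactly the same way and the conclusion follows immediately.
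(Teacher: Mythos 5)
Your proof is correct and is, in essence, the standard Mackey-formula argument that the paper points to (Lemma 2.6 of \cite{MT}): pin the vertex of the trivial-source module $M$ down to $S$ via Mackey and Frobenius reciprocity, then use normality of $P$ to see that $P$ acts trivially on $\Ind_S^H k$ and hence on $M$, forcing the free $kP$-part of $M{\downarrow_P}$ to vanish and $\dim_k M = 1$. This matches the paper's stated approach.
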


\begin{proof}
The proof is a straightforward application of the 
Mackey formula. The details appear in Lemma 2.6 in \cite{MT}.
\end{proof}

In the situation of Proposition~\ref{prop:normal1}, 
for any one-dimensional $kH$-module $L$,
we write $\chi_L:H\to k^\times$ for 
the corresponding group homomorphism (representation).

Our next result is an easy application of the 
Green correspondence. For details, see Proposition 2.6 in~\cite{CMN1}.

\begin{prop} \label{prop:Green}
Let $S$ be a Sylow $p$-subgroup of~$G$ and let $N = N_G(S)$.
\begin{itemize}
\item[(a)] The restriction map 
$\res_{N}^G : T(G) \to T(N)$ is injective, 
induced by the Green correspondence.
\item[(b)] In particular, the restriction map 
$\res_{N}^G : K(G) \to K(N)$ is injective.
\end{itemize}
\end{prop}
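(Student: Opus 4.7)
The plan is to identify the restriction $\res_N^G$, on indecomposable representatives, with the Green correspondence; injectivity then falls out of the bijectivity of the latter. Throughout I write $M_0$ for the unique (up to isomorphism) indecomposable endotrivial summand of an endotrivial module.

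Two preliminary steps come first. \emph{Step 1:} restricting the defining isomorphism $M \otimes M^* \cong k \oplus \proj$ from $G$ to $N$ (or to $S$) preserves both sides, since projective modules restrict to projectives; hence $\res_N^G$ (and $\res_S^G$) send endotrivial modules to endotrivial modules. \emph{Step 2:} every indecomposable endotrivial $kG$-module $M_0$ has vertex~$S$. Indeed, $M_0{\downarrow_S^G}$ is endotrivial by Step~1, and its unique indecomposable non-projective summand is an indecomposable endotrivial $kS$-module, which has vertex $S$ (a standard fact for endotrivial modules over a $p$-group, following for instance from the Carlson--Th\'evenaz classification in \cite{CT1,CT2}). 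Choosing a vertex $Q$ of $M_0$ inside $S$, $M_0$ is a summand of $\Ind_Q^G(L)$ for some $kQ$-module $L$, and Mackey writes $M_0{\downarrow_S^G}$ as a sum of summands of modules induced from the subgroups $S \cap {}^gQ$. The vertex of the indecomposable endotrivial summand is then simultaneously equal to $S$ and contained in some $S \cap {}^gQ$, forcing $S \subseteq {}^gQ$ and therefore $|Q|=|S|$, i.e.\ $Q$ is Sylow.

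\emph{Step 3:} with vertex equal to $S$ secured, the Green correspondence gives a bijection $f$ between isomorphism classes of indecomposable $kG$-modules with vertex $S$ and indecomposable $kN$-modules with vertex $S$, realized by $M_0{\downarrow_N^G} \cong f(M_0) \oplus X$, where $X$ is a sum of indecomposables whose vertices lie in proper intersections $S \cap {}^gS$, $g \notin N$. By Step~1, $M_0{\downarrow_N^G}$ is itself endotrivial, so by Krull--Schmidt it decomposes as its indecomposable non-projective summand plus a projective; comparing with the Green decomposition forces $X$ to be projective and $f(M_0)$ to be that indecomposable summand. Therefore $\res_N^G[M_0] = [f(M_0)]$ in $T(N)$, and (a) follows from the injectivity of the Green correspondence. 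Part~(b) is immediate, since the restriction of an injection to a subgroup is an injection.

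The main obstacle is the vertex computation of Step~2. Once one accepts that indecomposable endotrivial $kS$-modules have vertex $S$, the remainder is a routine matching of restriction to $N$ with the Green correspondence via Krull--Schmidt.
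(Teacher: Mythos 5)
Your proof is correct and follows the same route the paper indicates (it simply cites Proposition 2.6 of \cite{CMN1}, calling it an easy application of the Green correspondence): restriction preserves endotriviality, an indecomposable endotrivial module has vertex $S$, so the Green correspondence identifies $\res_N^G$ with a bijection on indecomposables, and injectivity follows by Krull--Schmidt. Part (b) then drops out since $\res_N^G$ carries $K(G)$ into $K(N)$.
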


We emphasize, that if $M$ and $L$ are $kG$-modules with trivial 
Sylow restriction, by Proposition~\ref{prop:normal1}, 
$M{\downarrow_{N}^G} \cong U \oplus \proj$ 
and $L{\downarrow_{N}^G} \cong V \oplus \proj$, where $U$ and $V$
are $kN$-module of dimension one. Here $U$ is the Green correspondent
of $M$, $V$ is the Green correspondent of $L$, and we see 
automatically that $U\otimes V$ is the Green correspondent of 
$(M \otimes L)_0$, the unique indecomposable nonprojective direct
summand of $M \otimes L$. 

We know that $K(N_G(S))$ consists exactly
of all one-dimensional representations of~$N$.
The main problem is to know which of them are in 
the image of the restriction map from~$K(G)$.
In other words, given a one-dimensional $kN$-module~$U$,
we need to know when its Green correspondent~$M$ is endotrivial.

Another way of viewing the situation is the following.

\begin{prop} \label{prop:charsub}
Let $S$ denote a Sylow $p$-subgroup of $G$ and let 
$N = N_G(S)$. Let $J \subseteq N$ be the intersection of 
the kernels of all one-dimensional $kN$-modules $U$ such
that the Green correspondent $M$ of $U$ is an endotrivial
$kG$-module. That is, $J$ is the intersection of the kernels of 
$U$ such that $[U]$ is in the image of the restriction
$\res_{N}^G : T(G) \to T(N)$. Then $K(G) \cong (N/J)^*
\cong \Hom(N/J, k^\times)$. 
\end{prop}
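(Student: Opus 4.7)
The plan is to translate the statement into a calculation in the character group $\Hom(N,k^\times)$ and then finish by Pontryagin duality on the finite quotient $N/J$.

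First, I would identify $K(N)$ with $\Hom(N,k^\times)$: since $S$ is a nontrivial normal $p$-subgroup of $N$, Proposition~\ref{prop:normal1} forces every indecomposable $kN$-module with trivial Sylow restriction to be one-dimensional, and conversely every character $\chi\colon N\to k^\times$ is automatically trivial on $S$ (because $k^\times$ has no $p$-torsion while $S$ is a $p$-group), so its class lies in $K(N)$. Since the tensor product of modules corresponds to the product of characters, this bijection is a group isomorphism $K(N)\cong\Hom(N,k^\times)$. Composing with the injection $\res_N^G\colon K(G)\hookrightarrow K(N)$ from Proposition~\ref{prop:Green}(b), we get $K(G)\cong I$, where $I\subseteq\Hom(N,k^\times)$ is the set of characters whose associated one-dimensional $kN$-module is the Green correspondent of some endotrivial $kG$-module; the tensor-product compatibility of Green correspondents noted after Proposition~\ref{prop:Green}, together with passage to duals, shows that $I$ is a subgroup.

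By the definition of $J$, every $\chi\in I$ is trivial on $J$, so $I\subseteq (N/J)^*:=\Hom(N/J,k^\times)$. Moreover $J$ contains $[N,N]$ (since characters factor through abelianization) and $S$ (by the observation above), so $N/J$ is a finite abelian group whose order divides $|N:S|$ and is therefore coprime to $p$.

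Finally, I apply Pontryagin duality to $A:=N/J$. Because $|A|$ is coprime to the characteristic of $k$ and $k$ is algebraically closed, $k^\times$ contains all $|A|$-th roots of unity, so the evaluation pairing $A\times A^*\to k^\times$ is perfect; the annihilator operation then gives an inclusion-reversing bijection between subgroups of $A$ and of $A^*$ with $H^{\perp\perp}=H$ for every subgroup $H\subseteq A^*$. By the definition of $J$, the annihilator of $I$ inside $A=N/J$ is trivial, so $I=\{0\}^\perp=A^*=(N/J)^*$, yielding $K(G)\cong (N/J)^*\cong\Hom(N/J,k^\times)$. The only step with real content is this last one: Pontryagin duality --- available precisely because $N/J$ has order prime to $p$ --- is what forces $I$ to exhaust $(N/J)^*$ rather than merely sit inside it.
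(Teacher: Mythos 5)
Your proof is correct and follows the same route as the paper: identify $K(N)$ with $\Hom(N,k^\times)$ via Proposition~\ref{prop:normal1}, transport $K(G)$ isomorphically onto a subgroup $I$ of $\Hom(N,k^\times)$ via the injective restriction map, and then invoke Pontryagin duality for the finite abelian $p'$-group $N/J$ to conclude $I=(N/J)^*$. The paper compresses the last step into the single assertion that a subgroup $A\subseteq\Hom(N,k^\times)$ is the full dual of $N/J$ when $J=\bigcap_{\chi\in A}\ker\chi$, and you have simply made that duality argument explicit.
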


\begin{proof}
The proof is straightforward. The restriction map 
$\res_{N}^G : K(G) \to K(N)$, being injective, 
gives an isomorphism between $K(G)$ and a subgroup
$A \subseteq K(N) \cong \Hom(N,k^\times)$, the group of one-dimensional 
representations of~$N$. But $A$ is isomorphic to the dual 
group of $N/J$ where $J$ is the intersection of the 
kernels of the elements of $A$.
\end{proof}

{}From the proposition, we see that the problem of 
characterizing the group $K(G)$ is equivalent to finding
the group $J$. The main purpose of this paper is to
offer a possible candidate for the subgroup $J$. In 
addition, we prove that the candidate is, in fact, equal to $J$
in the case that the Sylow $p$-subgroup $S$ is abelian.

%%%%%%%%%%%%%  section 3  %%%%%%%%%%%

\section{Weak homomorphisms and the kernel of restriction}\label{sec:weak}
In \cite{Ba1}, Balmer provided a new characterization of the 
group $K(G)$ in terms of the group of weak $S$-homomorphisms, which we
describe in this section. Note that 
Balmer's construction is more general than
the one that we use here. He defined ``weak $H$-homomorphisms'', for
any subgroup $H$ containing $S$. Because, we only deal with the 
case that $H=S$ in this paper, we call them ``weak homomorphisms''. 
Note that Balmer has expanded his results in~\cite{Ba2}, 
but here we only need the formulation in~\cite{Ba1}.

For notation, recall that $\ls gS$ denotes 
the conjugate subgroup $gSg^{-1}$,
while  $S^g$ denotes $g^{-1}Sg$. 

\begin{defi} \label{def:weakhomo}
A map $\chi:G\to k^\times$ is called a {\em weak homomorphism} 
(``weak $S$-homomorphism'' in the language of \cite{Ba1})
if it satisfies the following three conditions:
\begin{itemize}
\item[(a)] If $s\in S$, then $\chi(s)=1$.
\item[(b)] If $g\in G$ and $S\cap \ls gS=\{1\}$, then $\chi(g)=1$.
\item[(c)] If $a,b\in G$ and if $S\cap \ls aS \cap \ls {ab}S\neq \{1\}$, 
then $\chi(ab)=\chi(a)\chi(b)$.
\end{itemize}
\end{defi}

The product of two weak homomorphisms $\chi$ and $\psi$ is 
defined by $(\chi\psi)(g)=\chi(g)\psi(g)$
and is again a weak homomorphism.  The set 
$A(G)$ of all weak homomorphisms is an abelian group
under this operation. 

\begin{thm} \label{thm:Balmer} (Balmer~\cite{Ba1})
The groups $K(G)$ and $A(G)$ are isomorphic.
\end{thm}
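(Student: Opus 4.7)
The plan is to construct mutually inverse group homomorphisms $\Phi\colon K(G)\to A(G)$ and $\Psi\colon A(G)\to K(G)$. The intuition is that any $M$ with trivial Sylow restriction becomes isomorphic to the trivial module in every stable category $\stmod(kQ)$ for $Q\leq S$ a nontrivial $p$-subgroup, so the $G$-action on $M$ is determined ``up to scalars'' by a function $G\to k^\times$; the three weak-homomorphism axioms encode precisely the compatibility conditions this function must satisfy.

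For $\Phi$, fix $[M]\in K(G)$ with $M$ indecomposable and write $M{\downarrow_S^G}=kv\oplus P$ with $P$ projective over $kS$ and $v$ generating the trivial summand. For $g\in G$ with $S\cap \ls{g}{S}=\{1\}$, set $\chi_M(g)=1$. Otherwise let $Q=S\cap \ls{g}{S}\neq 1$. Then $gv$ is $Q$-fixed, since for $q=gsg^{-1}\in Q$ with $s\in S$ we have $q\cdot gv=gsv=gv$. Restricting the $kS$-decomposition to $Q$ gives $M{\downarrow_Q^G}=kv\oplus P{\downarrow_Q^S}$, and $P{\downarrow_Q^S}$ is a projective $kQ$-module which, since $Q\neq 1$, contains no trivial direct summand. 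This decomposition is therefore canonical, so there is a unique scalar $\chi_M(g)\in k^\times$ with $gv-\chi_M(g)v\in P{\downarrow_Q^S}$.

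Axioms (a) and (b) of Definition~\ref{def:weakhomo} are immediate from the construction. For axiom (c), let $a,b\in G$ with $Q:=S\cap \ls{a}{S}\cap \ls{ab}{S}\neq 1$; a short calculation shows $a^{-1}Qa\subseteq S\cap \ls{b}{S}$, so all three scalars $\chi_M(a)$, $\chi_M(b)$, and $\chi_M(ab)$ are defined. One derives $\chi_M(ab)=\chi_M(a)\chi_M(b)$ by interpreting each scalar as the $G$-action on a canonical generator of the one-dimensional space $\Hom_{\stmod(kQ)}(k,M{\downarrow_Q^G})$, using that the map $a\colon M\to M$ is a twisted $k$-linear isomorphism intertwining the $a^{-1}Qa$- and $Q$-module structures and sending projective summands to projective summands, then invoking associativity of the $G$-action together with the canonicity of the Brauer-type quotient $M{\downarrow_Q^G}/(\text{projective summand})\cong k$. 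Multiplicativity of $[M]\mapsto \chi_M$ under tensor product (the trivial summand of $M\otimes L$ is spanned by $v\otimes w$) and independence of the choice of $v$ are immediate.

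For the inverse $\Psi$, the key observation is that for $\chi\in A(G)$ the restriction $\chi|_N$ is a genuine group homomorphism $N\to k^\times$: for any $a,b\in N$ one has $S\cap \ls{a}{S}\cap \ls{ab}{S}=S\neq 1$, so axiom (c) collapses to multiplicativity on $N$. This yields a one-dimensional $kN$-module $U_\chi$, and by Propositions~\ref{prop:Green} and~\ref{prop:charsub} it suffices to show the Green correspondent of $U_\chi$ is endotrivial. The main obstacle of the proof is precisely this step: one must verify that axiom (c) for $a\notin N$ encodes the exact Mackey-theoretic compatibility needed to conclude that the Green correspondent has trivial Sylow restriction rather than merely contains a trivial Sylow summand. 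Granting this, one checks $\Phi\circ\Psi=\mathrm{id}_{A(G)}$ by evaluating $\chi_{M_\chi}$ on elements of $N$ (recovering $\chi|_N$ as the character of the Green correspondent) and extending along axiom (c), and $\Psi\circ\Phi=\mathrm{id}_{K(G)}$ by the injectivity part of Proposition~\ref{prop:Green}.
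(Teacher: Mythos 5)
The paper does not actually prove this theorem; it cites Balmer~\cite{Ba1} and only remarks that the inverse map $A(G)\to K(G)$ is realized by a norm-type endomorphism of the permutation module $k(G/S)$ whose image is the desired endotrivial module. So there is no in-paper proof to compare against, and your proposal must stand on its own.

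Your forward map $\Phi\colon K(G)\to A(G)$ is essentially sound, although the justification is thin in places: the scalar $\chi_M(g)$ is well-defined not because the decomposition $M{\downarrow}_Q = kv\oplus P$ is literally canonical, but because $M^Q\cap \operatorname{rad}(kQ)M$ is independent of the choice of projective complement, and $gv$ has nonzero image in the one-dimensional quotient $M^Q/(M^Q\cap\operatorname{rad}(kQ)M)$ since $gv\notin\operatorname{rad}(k\,{}^{g}\!S)M$. Likewise your check of axiom (c) is only gestured at and would need to be carried out. The genuine gap, however, is the inverse $\Psi$. You propose to take the Green correspondent $M_\chi$ of the one-dimensional $kN$-module $U_\chi$ determined by $\chi|_N$, and then you write that ``one must verify that axiom (c) for $a\notin N$ encodes the exact Mackey-theoretic compatibility needed to conclude that the Green correspondent has trivial Sylow restriction\,\dots\,Granting this\dots''. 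That verification \emph{is} the nontrivial content of Balmer's theorem: without it you have only produced an element of $K(N)$, not shown it lifts to $K(G)$. In particular, the restriction of a weak homomorphism to $N$ uses only axioms (a) and (c) with $a,b\in N$, so $\chi|_N$ cannot by itself see whether $M_\chi$ is endotrivial; the extra information carried by $\chi$ on $G\smallsetminus N$ has to be exploited. Balmer does this by building an idempotent-like endomorphism of $k(G/S)$ directly from $\chi$ via a norm formula, rather than passing through the Green correspondence. As it stands, $\Psi$ is not constructed, and the claimed isomorphism is not established.
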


The isomorphism is explicit and is described in details in~\cite{Ba1}.
In particular, it is shown that given a weak homomorphism $\chi$, there 
is a certain norm-type formula using $\chi$ that constructs a 
homomorphism from the permutation module $k(G/S)$ to itself, whose 
image is the endotrivial module associated to~$\chi$.

It is important, in what follows, to understand 
Balmer's isomorphism on restriction to a subgroup~$H$
having a nontrivial normal $p$-subgroup. This is our next result.
For notation, given a finite group $H$, we let 
$\dmd(H)$ be the smallest normal subgroup of~$H$
such that 
$H/\dmd(H)$ is an abelian $p'$-group.
In other words 
$\dmd(H) = [H,H]S$ is the subgroup of~$H$ generated 
by the commutator subgroup $[H,H]$
and by a Sylow $p$-subgroup $S$ of~$H$.

\begin{prop} \label{prop:normal}
Suppose that a finite group $H$ has a nontrivial normal $p$-subgroup.
\begin{itemize}
\item[(a)] Every weak homomorphism 
$\chi: H\to k^\times$ is a group homomorphism.
\item[(b)] The isomorphism $K(H)\to A(H)$ maps 
the class of a one-dimensional $kH$-module~$M$
to the corresponding group representation $H\to k^\times$.
\item[(c)] The group $A(H)$ is isomorphic to the group 
of one-dimensional representations of~$H$, that is,
the dual group $(H/\dmd(H))^*$.
\end{itemize}
\end{prop}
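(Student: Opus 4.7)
The plan is to establish (a) first, derive (c) as an immediate formal consequence, and then settle (b) by unwinding Balmer's explicit construction.

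For (a), I would fix a nontrivial normal $p$-subgroup $P \leq H$ and a Sylow $p$-subgroup $S \supseteq P$. Since $P$ is normal, $\ls{g}P = P$ for every $g \in H$, so $P \subseteq S \cap \ls{a}S \cap \ls{ab}S$ for all $a, b \in H$. In particular, every such intersection is nontrivial, so the hypothesis of condition~(c) of Definition~\ref{def:weakhomo} will always be met, forcing $\chi(ab) = \chi(a)\chi(b)$; the hypothesis of condition~(b) is never met, so there is nothing to check there.

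Given (a), part (c) is essentially formal. A weak homomorphism is now a genuine group homomorphism $\chi : H \to k^\times$ with $\chi(S) = 1$. Because $k^\times$ is abelian, such a $\chi$ also kills $[H,H]$, and hence factors through $H/[H,H]S = H/\dmd(H)$. Conversely, any homomorphism factoring through $H/\dmd(H)$ kills $S$ and is trivially a weak homomorphism, which identifies $A(H)$ with $(H/\dmd(H))^*$.

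For (b), I would appeal to the explicit form of Balmer's isomorphism in~\cite{Ba1}. By Proposition~\ref{prop:normal1}, an indecomposable representative $M$ of a class in $K(H)$ is one-dimensional, with character $\chi_M : H \to k^\times$ satisfying $\chi_M(S) = 1$; by (a), $\chi_M$ is a weak homomorphism. The remaining task is to verify that Balmer's isomorphism sends $[M]$ to $\chi_M$: concretely, one traces $M$ through the recipe that extracts a weak homomorphism from an endotrivial module by recording how each $h \in H$ acts on the canonical trivial summand of $M{\downarrow_S^H}$. This is the main obstacle, since one cannot avoid opening up Balmer's construction in~\cite{Ba1}; however, the verification should be immediate once done, because for a one-dimensional $M$ the canonical trivial summand of $M{\downarrow_S^H}$ is $M$ itself, on which $h$ acts by the scalar $\chi_M(h)$.
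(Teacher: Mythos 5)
Your proposal is correct and follows essentially the same route as the paper: part (a) is proved identically by observing that the normal $p$-subgroup makes condition~(c) of Definition~\ref{def:weakhomo} always applicable, and part (b) is handled by reference to the explicit form of Balmer's isomorphism. In part (c) you derive $\chi(S)=1$ directly from axiom~(a) of a weak homomorphism rather than from the paper's observation that $k^\times$ has no $p$-torsion, but this is a minor stylistic variation leading to the same conclusion.
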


\begin{proof}
To prove (a), let $Q$ be a nontrivial normal $p$-subgroup of~$H$.
For any $a,b\in G$, the subgroup 
$S\cap \ls aS \cap \ls {ab}S$ is nontrivial
because it contains~$Q=\ls a Q=\ls{ab}Q$.
Thus the third condition in Definition \ref{def:weakhomo}
implies that $\chi(ab)=\chi(a)\chi(b)$.

Statement (b) follows from the construction of the 
isomorphism given in Section~2.5 of~\cite{Ba1}.
See also Corollary~5.1 in~\cite{Ba1}.
The proof of (c) is straightforward, because the image 
of any group homomorphism $H\to k^\times$
is contained in the subgroup of~$k^\times$ consisting of 
roots of unity, namely the group of all $p'$-roots of unity
since $k$ is algebraically closed and has characteristic~$p$.
\end{proof}

\begin{prop} \label{prop:injective}
Let $N = N_G(S)$ where $S$ is the Sylow $p$-subgroup of $G$.
The restriction map 
$\res_{N}^G : A(G) \to A(N)$ is injective.
\end{prop}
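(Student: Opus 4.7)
My plan is to deduce the injectivity of $\res_N^G : A(G) \to A(N)$ from that of $\res_N^G : K(G) \to K(N)$, which was established in Proposition~\ref{prop:Green}(b), by transporting it across Balmer's isomorphism from Theorem~\ref{thm:Balmer}. Concretely, I aim to establish commutativity of the square
\[
\begin{array}{ccc}
K(G) & \stackrel{\sim}{\longrightarrow} & A(G) \\
{\scriptstyle\res_N^G}\Big\downarrow & & \Big\downarrow{\scriptstyle\res_N^G} \\
K(N) & \stackrel{\sim}{\longrightarrow} & A(N)
\end{array}
\]
whose horizontal arrows are the isomorphisms of Theorem~\ref{thm:Balmer}. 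Once this is in place, injectivity of the left vertical arrow, combined with bijectivity of the horizontal arrows, forces the right vertical arrow to be injective, yielding the proposition.

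To verify commutativity, fix $[M] \in K(G)$ corresponding to $\chi \in A(G)$. Because $N$ contains $S$ as a nontrivial normal $p$-subgroup, Proposition~\ref{prop:normal}(b) identifies the bottom horizontal arrow with the map sending the class of a one-dimensional $kN$-module~$U$ to the corresponding character $\chi_U : N \to k^\times$. Moreover, by Proposition~\ref{prop:normal1}, we have $M{\downarrow_N^G} \cong U \oplus (\proj)$ for some one-dimensional $kN$-module~$U$, and hence $\res_N^G[M] = [U]$. Commutativity of the square thus reduces to the identity $\chi|_N = \chi_U$, which is a naturality statement for Balmer's construction: the norm-type homomorphism on $k(G/S)$ built from~$\chi$ (recalled in the paragraph after Theorem~\ref{thm:Balmer}), when restricted to~$N$, acts on the natural summand $k(N/S)$ of $k(G/S){\downarrow_N^G}$ coming from the Mackey decomposition as the analogous homomorphism built from $\chi|_N$, and its image realizes~$U$ with character~$\chi|_N$.

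The main obstacle is exactly this verification of commutativity, equivalently, the naturality of Balmer's isomorphism with respect to restriction to a subgroup containing~$S$. This property is implicit in the explicit norm-type construction of~\cite{Ba1}, but it needs to be extracted from that construction with some care, using the Mackey formula for $k(G/S){\downarrow_N^G}$ and the fact that on the identity-coset summand $k(N/S)$ the construction is visibly the one associated to $\chi|_N$. Once this naturality is recorded, the proposition follows by a purely formal diagram chase.
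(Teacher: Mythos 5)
Your proposal is correct and follows essentially the same route as the paper: deduce injectivity of $\res_N^G : A(G) \to A(N)$ from injectivity of $\res_N^G : K(G) \to K(N)$ (Proposition~\ref{prop:Green}) by transporting across Balmer's isomorphism (Theorem~\ref{thm:Balmer}), using naturality of that isomorphism. The paper simply asserts the naturality without elaboration; your discussion of why the square commutes, via the Mackey decomposition of $k(G/S){\downarrow_N^G}$ and the identity-coset summand $k(N/S)$, is a reasonable sketch of the point the paper leaves implicit.
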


\begin{proof}
This follows from Proposition~\ref{prop:Green}, 
Balmer's isomorphism in Theorem~\ref{thm:Balmer},
and the fact that this isomorphism is natural.
\end{proof}

%%%%%%%%%%%%  section 4  %%%%%%%%%%

\section{A system of local subgroups}  \label{sec:local}

\noindent
In this section, we discuss the properties of a sequence of  
subgroups $\rho^i(Q) \subseteq N_G(Q)$,
where $Q$ is a nontrivial $p$-subgroup of~$S$ and $i \geq 1$.
The construction of the sequence was first 
presented in~\cite{CMN3}, 
though the version here is slightly different. 
The definition of the subgroups $\rho^i(Q)$ 
depends not only on~$N_G(Q)$,
but involves all normalizers of nontrivial 
subgroups of the Sylow subgroup~$S$ of~$G$. 

The definition proceeds inductively as follows.
We fix a Sylow $p$-subgroup $S$ of~$G$.
For any nontrivial subgroup $Q$ of~$S$, let  
\[
\rho^1(Q) \; := \; 
\dmd(N_G(Q)) \,.
\]
As before, 
$\dmd(N_G(Q))$ is the product of the 
commutator subgroup of $N_G(Q)$
and a Sylow $p$-subgroup of~$N_G(Q)$. 
The original version in~\cite{CMN3} uses simply
the commutator subgroup of $N_G(Q)$. For $i >1$, we let
\[
\rho^i(Q) \; := \;\; <\,  
N_G(Q) \cap \rho^{i-1}(Q^\prime) \;\mid\; \{1\}\neq Q' 
\subseteq S  \,> \,,
\]
the subgroup generated by all the subgroups
$N_G(Q) \cap \rho^{i-1}(Q^\prime)$, 
for all nontrivial subgroups $Q^\prime$ of~$S$.
This contains $\rho^{i-1}(Q)$, so we 
have a nested sequence of subgroups
\[
\rho^1(Q) \subseteq \rho^2(Q) \subseteq \rho^3(Q) 
\subseteq \ldots \subseteq N_G(Q) \,.
\]
Since $G$ is finite, the sequence eventually 
stabilizes and we let $\rho^\infty(Q)$ be the limit subgroup of
the sequence $\{\rho^i(Q) \mid i\geq1\}$, namely their union.

The definition of the subgroups $\rho^i(Q)$, was originally
motivated in \cite{CMN3} by the following observation. 

\begin{prop} \label{prop:rho-weak}
Suppose that $\chi: G \to k^\times$ is a weak homomorphism 
as defined in the last section. If $x \in \rho^i(Q)$ for some
$i \geq 1$ and for some nontrivial subgroup $Q \in S$, then 
$\chi(x) = 1$. 
\end{prop}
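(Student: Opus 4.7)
The plan is to induct on $i\geq 1$, with essentially all of the content concentrated in a single preliminary observation: for every nontrivial $Q\subseteq S$, the restriction $\chi|_{N_G(Q)}$ is an ordinary group homomorphism $N_G(Q)\to k^\times$. To verify this, fix $a,b\in N_G(Q)$. Because $a$ and $ab$ normalize $Q$, we have $a^{-1}Qa=Q\subseteq S$ and $(ab)^{-1}Q(ab)=Q\subseteq S$, i.e.\ $Q\subseteq {}^aS$ and $Q\subseteq {}^{ab}S$; combined with $Q\subseteq S$, this gives $Q\subseteq S\cap {}^aS\cap {}^{ab}S$. Since $Q$ is nontrivial, condition (c) of Definition~\ref{def:weakhomo} forces $\chi(ab)=\chi(a)\chi(b)$. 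This is exactly the mechanism of Proposition~\ref{prop:normal}(a) applied to the normal $p$-subgroup $Q$ of $N_G(Q)$.

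Granted this, the base case $i=1$ is immediate: the image of $\chi|_{N_G(Q)}$ lies in the torsion subgroup of $k^\times$, which consists entirely of $p'$-roots of unity since $k$ has characteristic $p$. Hence the image is abelian of order coprime to $p$, so the kernel contains both $[N_G(Q),N_G(Q)]$ and every $p$-element of $N_G(Q)$, in particular a Sylow $p$-subgroup of $N_G(Q)$. Thus the kernel contains $\dmd(N_G(Q))=\rho^1(Q)$. For the inductive step, assume the result at level $i-1$ for every nontrivial $Q'\subseteq S$. By definition, $\rho^i(Q)$ is generated, as a subgroup of $N_G(Q)$, by the sets $N_G(Q)\cap \rho^{i-1}(Q')$ with $Q'$ ranging over nontrivial subgroups of $S$; each such generator lies in $\rho^{i-1}(Q')$ and is therefore sent to $1$ by $\chi$ by the induction hypothesis. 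Since $\chi|_{N_G(Q)}$ is multiplicative by the preliminary observation, it vanishes on the whole subgroup generated by these elements, namely $\rho^i(Q)$.

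The only real obstacle is the multiplicativity observation; once it is in hand, the induction is purely formal. Its crux is that the nontrivial $p$-subgroup $Q$ is automatically contained in all three of $S$, ${}^aS$, and ${}^{ab}S$ whenever $a,b\in N_G(Q)$, so that condition (c) of the weak-homomorphism definition is triggered for \emph{every} pair of elements of $N_G(Q)$, thereby promoting $\chi|_{N_G(Q)}$ to an honest character of $N_G(Q)$.
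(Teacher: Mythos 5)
Your proof is correct and follows essentially the same inductive strategy as the paper's: observe that condition~(c) of Definition~\ref{def:weakhomo} promotes $\chi|_{N_G(Q)}$ to an honest homomorphism (since the nontrivial normal $p$-subgroup $Q$ sits inside $S\cap{}^aS\cap{}^{ab}S$ for all $a,b\in N_G(Q)$), then induct on $i$. If anything, your base case is slightly more careful than the paper's: the paper invokes condition~(a) to get $\chi=1$ on $Q$, but $\rho^1(Q)=\dmd(N_G(Q))$ contains a full Sylow $p$-subgroup of $N_G(Q)$ which need not lie inside $S$; your observation that the image of $\chi|_{N_G(Q)}$ is an abelian $p'$-group (because $k^\times$ has no nontrivial $p$-torsion) cleanly forces the kernel to contain both the commutator subgroup and every $p$-element of $N_G(Q)$, hence all of $\dmd(N_G(Q))$. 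This matches the mechanism of Proposition~\ref{prop:normal}, as you note.
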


\begin{proof}
In the case that $i = 1$, the statement is a trivial consequence
of the definition of a weak homomorphism. That is, $\chi(x) = 1$
for any $x \in Q$ by Definition \ref{def:weakhomo}(a), and is a 
homomorphism to an abelian group when restricted to $N_G(Q)$ by
\ref{def:weakhomo}(c).  So assume that $i > 1$ and that 
$\chi(x) = 1$ for all $x \in \rho^{i-1}(Q)$ for all nontrivial 
subgroups $Q \subseteq S$. Then $\chi(x) = 1$ for all $x \in 
N_G(Q) \cap \rho^{i-1}(Q^\prime)$ for any nontrivial subgroups 
$Q$ and $Q^\prime$. Thus $\chi(x) = 1$ for all $x \in \rho^i(Q)$,
and the proposition is proved by induction. 
\end{proof}

Suppose that $M$ is a $kG$-module with 
trivial Sylow restriction. By Proposition~\ref{prop:normal1},
for any nontrivial subgroup $Q$ of $S$, 
there is a one-dimensional $kN_G(Q)$-module $L_Q$ such that
$M{\downarrow_{N_G(Q)}^G} \cong L_Q \oplus\proj$. 
We write $\chi_Q=\chi_{L_Q}$ for the corresponding
group homomorphism $\chi_Q:N_G(Q)\to k^\times$.

The next result encapsulates the main idea of this section.
It should be compared with Proposition \ref{prop:rho-weak}.

\begin{prop} \label{prop:main}
Suppose that $M$ is a $kG$-module with trivial Sylow 
restriction, let $Q$ be a nontrivial subgroup of $S$
and let $\chi_Q:N_G(Q)\to k^\times$ be defined as above.
Then $\rho^\infty(Q)$ is contained in the kernel of~$\chi_Q$.
\end{prop}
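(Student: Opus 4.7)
The plan is to reduce everything to Proposition~\ref{prop:rho-weak} via Balmer's isomorphism $K(G)\cong A(G)$ of Theorem~\ref{thm:Balmer}, by comparing the weak homomorphism on $G$ attached to $M$ with the honest character $\chi_Q$ on $N_G(Q)$ attached to $L_Q$.

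First I would pick up the weak homomorphism $\chi\colon G\to k^\times$ corresponding to the class $[M]\in K(G)$ under Balmer's isomorphism. Since $Q\trianglelefteq N_G(Q)$ is a nontrivial normal $p$-subgroup, Proposition~\ref{prop:normal} applies to $H=N_G(Q)$: every weak homomorphism on $N_G(Q)$ is an honest group homomorphism, and the Balmer isomorphism $K(N_G(Q))\to A(N_G(Q))$ sends the class of a one-dimensional $kN_G(Q)$-module to its associated linear character. In particular, the class $[L_Q]\in K(N_G(Q))$ corresponds to the character $\chi_Q=\chi_{L_Q}$.

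Next I would invoke naturality of Balmer's isomorphism with respect to restriction (this is the same naturality that gives Proposition~\ref{prop:injective}). Applied to $N_G(Q)\subseteq G$, it forces the commutativity
\[
\xymatrix{
K(G) \ar[r]^-{\cong} \ar[d]_-{\res^G_{N_G(Q)}} & A(G) \ar[d]^-{\res^G_{N_G(Q)}} \\
K(N_G(Q)) \ar[r]^-{\cong} & A(N_G(Q)).
}
\]
Chasing $[M]$ around this square shows $\chi\big|_{N_G(Q)}=\chi_Q$, because $[M]$ restricts to $[L_Q]$ on the left side and $\chi$ restricts to $\chi\big|_{N_G(Q)}$ on the right side, and both images must agree in $A(N_G(Q))$.

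Finally I would apply Proposition~\ref{prop:rho-weak} to the weak homomorphism $\chi$: it vanishes on $\rho^i(Q)$ for every $i\geq 1$, and therefore on the union $\rho^\infty(Q)$. Since $\rho^\infty(Q)\subseteq N_G(Q)$ and $\chi$ agrees with $\chi_Q$ on $N_G(Q)$, this gives $\chi_Q(\rho^\infty(Q))=\{1\}$, as required. The only step that requires real attention is the naturality/identification in the middle paragraph; everything else is a direct citation of previously established facts, so I do not expect a serious obstacle.
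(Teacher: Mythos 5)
Your approach is genuinely different from the paper's, but it has a gap at the step you yourself flag as the one that ``requires real attention.'' You invoke naturality of Balmer's isomorphism for the restriction map $K(G)\to K(N_G(Q))$, claiming it is ``the same naturality that gives Proposition~\ref{prop:injective}.'' But Proposition~\ref{prop:injective} concerns restriction to $N_G(S)$, which \emph{contains} $S$, and Balmer's theory of weak $S$-homomorphisms and its compatibility with restriction are set up precisely for subgroups $H\supseteq S$. The subgroup $N_G(Q)$ for an arbitrary nontrivial $Q\subseteq S$ need \emph{not} contain $S$ (it does only when $N_S(Q)=S$), so the commutative square you write down is not covered by the naturality cited in the paper, nor is it clearly established in \cite{Ba1}. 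Making the square commute for such $H$ would require a separate argument, which is essentially the hard content of the proposition you are trying to prove.

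The paper sidesteps this issue entirely. Its proof is a direct induction on~$i$: writing $x\in\rho^i(Q)$ as a product of elements $x_t\in N_G(Q)\cap\rho^{i-1}(Q_t)$, it restricts $M$ to $N_G(Q)\cap N_G(Q_t)$ and compares the two resulting one-dimensional summands $L_Q$ and $L_{Q_t}$ on that intersection. Since $N_G(Q)\cap N_G(Q_t)$ contains $Z(S)$, a nontrivial $p$-group, every projective over it has dimension divisible by $p$, so Krull--Schmidt forces $L_Q$ and $L_{Q_t}$ to restrict isomorphically there. Then $\chi_Q(x_t)=\chi_{Q_t}(x_t)=1$ by the inductive hypothesis. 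Note that this argument never needs the global identity $\chi|_{N_G(Q)}=\chi_Q$ on all of $N_G(Q)$ (the stronger fact your naturality would give); it only needs agreement on the intersections $N_G(Q)\cap N_G(Q_t)$, which the Krull--Schmidt comparison yields directly. If you want to salvage your route, you would need to either prove the naturality of Balmer's isomorphism for restriction to subgroups not containing $S$, or replace that step with the elementary Krull--Schmidt comparison as the paper does.
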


\begin{proof}
We prove, by induction on~$i$, that if $x\in \rho^i(Q)$, then $\chi_Q(x)=1$.
In the case that $i=1$, the result is 
a consequence of the fact that
$\rho^1(Q) = 
\dmd(N_G(Q))$ is in the kernel of every 
one-dimensional character on $N_G(Q)$.
Inductively, we assume that the lemma is 
true for $\rho^j(Q')$, whenever $j < i$ and $1\neq Q'\subseteq S$.
Because $x  \in \rho^i(Q)$ we have 
that $x = x_1 \cdots x_m$ for some $m$,
where  $x_t \in N_G(Q) \cap \rho^{i-1}(Q_t)$ 
for some nontrivial subgroup 
$Q_t$ of $S$,  for each $t = 1, \ldots, m$.
For each $t$ we consider the restriction 
of $M$ to $N_G(Q) \cap N_G(Q_t)$.
Using the notation above, this yields
\[
M{\downarrow_{N_G(Q)\cap N_G(Q_t)}^G} \quad \cong \quad
L_Q \downarrow_{N_G(Q)\cap N_G(Q_t)}^{N_G(Q)} \oplus \proj
\quad \cong \quad 
L_{Q_t} \downarrow_{N_G(Q)\cap N_G(Q_t)}^{N_G(Q_t)} \oplus \proj \,.
\]
The intersection $N_G(Q) \cap N_G(Q_t)$ contains 
the center of $S$ which is a nontrivial $p$-subgroup.
Thus any indecomposable  projective module 
over $N_G(Q) \cap N_G(Q_t)$ has dimension divisible
by~$p$. Therefore, by the Krull-Schmidt Theorem,
$$
L_Q \downarrow_{N_G(Q)\cap N_G(Q_t)}^{N_G(Q)} \quad \cong \quad
L_{Q_t} \downarrow_{N_G(Q)\cap N_G(Q_t)}^{N_G(Q_t)} \,.
$$
For all $t$, it follows by induction that 
$\chi_Q(x_t) = \chi_{Q_t}(x_t) = 1$, because $x_t\in \rho^{i-1}(Q_t)$.
Thus $\chi_Q(x) = 1$, as asserted. 
\end{proof}

The main theorem of this section is a special case of the above.

\begin{thm} \label{thm:vanishrho}
Suppose that $M$ is an endotrivial $kG$-module with trivial Sylow 
restriction, {\it i.~e.} $M{\downarrow_S^G} \cong k \oplus \proj$.
Then $M{\downarrow_{\rho^{\infty}(S)}^G} \cong k \oplus \proj$.
\end{thm}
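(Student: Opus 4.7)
The plan is to obtain the theorem as a direct consequence of Proposition~\ref{prop:main} applied to the special case $Q=S$, together with the fact that projectivity is preserved under restriction to any subgroup.

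First, I would observe that $N := N_G(S)$ contains the nontrivial normal $p$-subgroup $S$, so Proposition~\ref{prop:normal1} applies: since $M$ has trivial Sylow restriction, the restriction
\[
M\downarrow_N^G \;\cong\; L_S \oplus \proj
\]
with $L_S$ a one-dimensional $kN$-module. Let $\chi_S := \chi_{L_S} : N \to k^\times$ be the corresponding character, as in the paragraph preceding Proposition~\ref{prop:main}. Now apply Proposition~\ref{prop:main} with $Q = S$ (note $\rho^\infty(S)\subseteq N_G(S)=N$ by construction): this yields $\rho^\infty(S) \subseteq \ker(\chi_S)$. Equivalently, $L_S$ restricts to the trivial $k\rho^\infty(S)$-module $k$.

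Next, I would further restrict from $N$ down to $\rho^\infty(S)$. The one-dimensional summand $L_S$ becomes the trivial module $k$ by the previous step. The projective summand remains projective after restriction, since $kN$ is free as a (right) $k\rho^\infty(S)$-module (decompose into cosets), so any direct summand of a free $kN$-module is a direct summand of a free $k\rho^\infty(S)$-module. Combining these two observations gives
\[
M\downarrow_{\rho^\infty(S)}^G \;\cong\; \bigl(L_S \oplus \proj\bigr)\downarrow_{\rho^\infty(S)}^N \;\cong\; k \oplus \proj,
\]
which is the desired conclusion.

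There is no real obstacle: the substantive content has already been packaged into Propositions~\ref{prop:normal1} and~\ref{prop:main}, and the only remaining ingredient is the elementary fact about restriction of projectives. The slight care needed is to confirm that $\rho^\infty(S)$ is genuinely a subgroup of $N$ (immediate from the inductive definition, since every $\rho^i(Q) \subseteq N_G(Q)$) so that the restriction from $N$ to $\rho^\infty(S)$ makes sense.
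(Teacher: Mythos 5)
Your argument is exactly the intended one: the paper's proof is the single line that Theorem~\ref{thm:vanishrho} is a special case of Proposition~\ref{prop:main} (taking $Q=S$), and you have simply filled in the routine details — that $\rho^\infty(S)\subseteq\ker\chi_S$ forces $L_S$ to restrict to $k$, and that projectives restrict to projectives. Correct, and the same approach as the paper.
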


There are several immediate corollaries. 

\begin{cor} \label{cor:rhoeqnorm}
Suppose that $\rho^{\infty}(S) = N_G(S)$. Then the only  indecomposable 
$kG$-module with trivial Sylow restriction is the trivial module.
In other words, $K(G)=\{0\}$.
\end{cor}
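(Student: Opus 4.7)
The plan is that this corollary is a short deduction from Theorem~\ref{thm:vanishrho} combined with injectivity of Green correspondence into the normalizer, so there is no real obstacle beyond assembling the ingredients.

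First I would take any indecomposable $kG$-module $M$ with trivial Sylow restriction, so $[M]\in K(G)$. Directly applying Theorem~\ref{thm:vanishrho} gives $M{\downarrow_{\rho^\infty(S)}^G}\cong k\oplus\proj$. By the hypothesis $\rho^\infty(S)=N_G(S)$, this reads $M{\downarrow_{N_G(S)}^G}\cong k\oplus\proj$, so the Green correspondent of $M$ in $N=N_G(S)$ is the trivial one-dimensional module.

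Next I would invoke Proposition~\ref{prop:Green}(b), which says $\res_N^G:K(G)\to K(N)$ is injective. Since the image of $[M]$ is the class of the trivial $kN$-module, i.e.\ zero in $K(N)$, injectivity forces $[M]=0$ in $K(G)$. As $[M]\in K(G)$ was arbitrary, $K(G)=\{0\}$, and in particular $M$ itself is isomorphic to $k$ plus a projective, so the only indecomposable endotrivial module with trivial Sylow restriction is~$k$ itself.

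As an alternative route, one could bypass the explicit appeal to the Green correspondence by applying Proposition~\ref{prop:main} with $Q=S$: the one-dimensional summand $L_S$ of $M{\downarrow_{N_G(S)}^G}$ gives a character $\chi_S:N_G(S)\to k^\times$ whose kernel contains $\rho^\infty(S)=N_G(S)$, hence $\chi_S$ is trivial and $L_S\cong k$, which suffices to conclude. Either way, the corollary is essentially a one-line consequence of the theorem, and the only thing to check is that one is allowed to specialise Theorem~\ref{thm:vanishrho} (stated for general endotrivial $M$ with trivial Sylow restriction) at the level of $N_G(S)$, which is immediate from $\rho^\infty(S)\subseteq N_G(S)$.
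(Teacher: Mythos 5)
Your proposal is correct and follows essentially the same route as the paper: apply Theorem~\ref{thm:vanishrho} together with the hypothesis $\rho^\infty(S)=N_G(S)$ to get $M{\downarrow_{N_G(S)}^G}\cong k\oplus\proj$, then conclude via Green correspondence that $M$ is trivial. The paper phrases the final step as uniqueness of the Green correspondent of $k_{N_G(S)}$ rather than as injectivity of $\res_N^G$ on $K(G)$, but these are the same fact, so there is no substantive difference.
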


\begin{proof}
If $M$ is a $kG$-module with trivial 
Sylow restriction, then our assumption implies that
$M{\downarrow_{N_G(S)}^G}  \cong k \oplus \proj$. 
Because $M$ is indecomposable,
it is the Green correspondent of the trivial 
module $k_{N_G(S)}$. Thus 
$M$ is the trivial module.
\end{proof}

\begin{cor} \label{cor:normq}
Suppose that $Q$ is a normal subgroup of $S$ and that for some $i$,
$N_G(S) \subseteq \rho^i(Q)$. Then the only  indecomposable
$kG$-module with trivial Sylow restriction is the trivial module.
In other words, $K(G)=\{0\}$.
\end{cor}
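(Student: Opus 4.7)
My plan is to adapt the argument of Corollary \ref{cor:rhoeqnorm} by passing through the intermediate group $N_G(Q)$: given an indecomposable $M$ with trivial Sylow restriction, I will produce a one-dimensional $kN_G(Q)$-summand $L_Q$ of $M\!\downarrow_{N_G(Q)}^G$, invoke Proposition \ref{prop:main} to force its character $\chi_Q$ to vanish on $N_G(S)$, and then match the Green correspondent $L_S$ (at $N_G(S)$) with the restriction of $L_Q$. Once $L_S \cong k$, the injectivity of the Green correspondence finishes the job.

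The first step will be to record that $Q \trianglelefteq S$ implies $S \subseteq N_G(Q)$, and that the hypothesis $N_G(S) \subseteq \rho^i(Q) \subseteq N_G(Q)$ places $N_G(S)$ inside $N_G(Q)$. Since $Q$ is a nontrivial normal $p$-subgroup of $N_G(Q)$, Proposition \ref{prop:normal1} yields a decomposition $M\!\downarrow_{N_G(Q)}^G \cong L_Q \oplus \proj$ with $L_Q$ one-dimensional, and I write $\chi_Q$ for the associated character. Next, Proposition \ref{prop:main} gives $\rho^\infty(Q) \subseteq \Ker(\chi_Q)$, so by the hypothesis $\chi_Q$ is trivial on $N_G(S)$. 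Finally, restricting $M\!\downarrow_{N_G(Q)}^G$ further to $N_G(S)$ and comparing with $M\!\downarrow_{N_G(S)}^G \cong L_S \oplus \proj$, a Krull--Schmidt argument will identify $L_S \cong L_Q\!\downarrow_{N_G(S)}^{N_G(Q)}$, whose character is trivial by the previous step. Hence $L_S \cong k$, and Proposition \ref{prop:Green}(a) forces $M \cong k$, so $K(G) = \{0\}$.

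The only delicate point will be this Krull--Schmidt matching in the final restriction step: I must be sure that $L_Q\!\downarrow_{N_G(S)}^{N_G(Q)}$ is not absorbed into the projective part on the right-hand side. This is harmless, because $N_G(S)$ contains the nontrivial normal $p$-subgroup $S$, so any projective $kN_G(S)$-module restricts to a projective $kS$-module and therefore has dimension divisible by $|S|>1$; a one-dimensional module such as $L_Q\!\downarrow_{N_G(S)}^{N_G(Q)}$ cannot be projective, and it pairs uniquely with the one-dimensional non-projective summand $L_S$. Beyond this bookkeeping I do not anticipate any real obstacle; the corollary is essentially a repackaging of Proposition \ref{prop:main} together with Green correspondence.
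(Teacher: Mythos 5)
Your proof is correct, but it takes a genuinely different and longer route than the paper. The paper's proof is a one-line reduction to Corollary~\ref{cor:rhoeqnorm}: since $N_G(S)\subseteq\rho^i(Q)$, the term $N_G(S)\cap\rho^i(Q)=N_G(S)$ appears among the generators of $\rho^{i+1}(S)$ in its inductive definition, so $N_G(S)\subseteq\rho^{i+1}(S)\subseteq N_G(S)$, giving $\rho^\infty(S)=N_G(S)$ and the hypothesis of the previous corollary. You instead bypass that observation and work directly with the module: you apply Proposition~\ref{prop:main} at $Q$ rather than at $S$ (the latter being the content of Theorem~\ref{thm:vanishrho}), obtain $\chi_Q$ trivial on $N_G(S)\subseteq\rho^\infty(Q)$, and descend to $N_G(S)$ by a Krull--Schmidt comparison, correctly handling the projective bookkeeping via the nontrivial normal $p$-subgroup $S$. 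What the paper's route buys is economy --- it turns the hypothesis on $\rho^i(Q)$ into a hypothesis on $\rho^{i+1}(S)$ purely group-theoretically and never re-opens the module argument. What your route buys is self-containment: it does not rely on noticing the recursive bounce-back in the definition of $\rho^{i+1}(S)$, at the cost of essentially re-running one step of the descent already carried out inside the proof of Proposition~\ref{prop:main}. Both are sound.
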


\begin{proof}
Under the hypothesis we have that $N_G(S) \subseteq \rho^{i+1}(S)$
and we are done by the previous corollary. 
\end{proof}

Note in the above corollary, that $Q$ being normal in $S$ does not 
assure that $N_G(S)$ is a subgroup of $N_G(Q)$. However, if $Q$ is 
characteristic in $S$ then this is a certainty. 

\begin{cor} \label{cor:charq}
Suppose that $Q$ is a characteristic subgroup of $S$ and that 
for some $i$, $N_G(Q) = \rho^i(Q)$. Then the only  indecomposable
$kG$-module with trivial Sylow restriction is the trivial module.
In other words, $K(G)=\{0\}$.
\end{cor}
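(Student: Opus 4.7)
The plan is to reduce directly to Corollary \ref{cor:normq}. Recall that a characteristic subgroup is preserved by every automorphism of the ambient group, in particular by every inner automorphism of~$S$, so $Q$ being characteristic in~$S$ implies that $Q$ is normal in~$S$. Hence the normality hypothesis of Corollary~\ref{cor:normq} is satisfied, and it only remains to verify that $N_G(S) \subseteq \rho^i(Q)$.

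The key point, which is noted in the remark just before the statement, is that when $Q$ is characteristic in~$S$ one has the stronger containment $N_G(S) \subseteq N_G(Q)$. Indeed, any $g \in N_G(S)$ acts by conjugation as an automorphism of~$S$, and since $Q$ is invariant under every automorphism of~$S$, we have $gQg^{-1} = Q$, that is, $g \in N_G(Q)$. Combining this with the hypothesis $N_G(Q) = \rho^i(Q)$ yields $N_G(S) \subseteq \rho^i(Q)$.

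This is exactly the hypothesis of Corollary~\ref{cor:normq}, which then gives $K(G) = \{0\}$ and completes the proof. There is essentially no obstacle here: the work was already done in Corollary~\ref{cor:normq}, and the role of the characteristic hypothesis is purely to ensure that $N_G(S)$ normalizes~$Q$, so that the containment $N_G(S) \subseteq \rho^i(Q)$ makes sense and follows immediately from the equality $N_G(Q)=\rho^i(Q)$.
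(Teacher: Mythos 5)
Your proof is correct and follows exactly the route the paper intends: the remark immediately preceding the corollary points out that $Q$ characteristic in $S$ guarantees $N_G(S)\subseteq N_G(Q)$, and you combine this with the hypothesis $N_G(Q)=\rho^i(Q)$ to land squarely on the hypotheses of Corollary~\ref{cor:normq}. Nothing to add.
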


%%%%%%%%%%%  section 5  %%%%%%%%%%%

\section{Abelian Sylow subgroup}\label{sec:mainthm}

\noindent
The purpose of this section is to prove 
our main theorem, which describes $K(G)$
when a Sylow $p$-subgroup $S$ of~$G$ is abelian.
Specifically, we show the following.

\begin{thm} \label{thm:main}
Suppose that a Sylow $p$-subgroup $S$ of~$G$ is abelian. Let $N = N_G(S)$.
\begin{itemize}
\item[(a)] The image of the restriction map 
$\res_{N}^G : A(G) \to A(N)$
consists exactly of all group homomorphisms 
$N_G(S)\to k^\times$ having $\rho^2(S)$ in their kernel.
\item[(b)] $K(G)\cong A(G) \cong \big(N_G(S)/\rho^2(S) \big)^*$.
\item[(c)] $\rho^2(S)=\rho^\infty(S)$.
\end{itemize}
\end{thm}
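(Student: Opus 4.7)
The three parts of the theorem are tightly coupled; I would prove (a) first and derive (b) and (c) formally. Part (b) is immediate from (a), Balmer's isomorphism (Theorem~\ref{thm:Balmer}), and Proposition~\ref{prop:normal}(c), which identifies $A(N)$ with $\Hom(N,k^\times)=(N/\rho^1(S))^*$. For (c), Proposition~\ref{prop:main} applied to $Q=S$ forces the image of $\res^G_N\colon A(G)\to A(N)$ to lie inside $(N/\rho^\infty(S))^*\subseteq(N/\rho^2(S))^*$, while (a) asserts that this image equals the outer group; combined with the trivial inclusion $\rho^2(S)\subseteq\rho^\infty(S)$, this forces $\rho^2(S)=\rho^\infty(S)$.

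For (a), the inclusion ``$\subseteq$'' is the easy direction: for $\chi\in A(G)$, Proposition~\ref{prop:normal}(a) shows that $\chi|_N$ is a group homomorphism, and Proposition~\ref{prop:main} (applied with $Q=S$) shows that it vanishes on $\rho^\infty(S)\supseteq\rho^2(S)$. The reverse inclusion is the heart of the matter. Given $\psi\colon N\to k^\times$ with $\rho^2(S)\subseteq\ker\psi$, I would construct a weak homomorphism $\chi\colon G\to k^\times$ with $\chi|_N=\psi$ by invoking Burnside's fusion theorem---available precisely because $S$ is abelian---to the effect that for every $g\in G$ with $R:=S\cap S^g\neq 1$, conjugation by $g$ on $R$ coincides with conjugation by some $n\in N$, equivalently $g\in n\cdot C_G(R)$. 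The construction is
\[
\chi(g)\;=\;\begin{cases}\psi(g)& \text{if }g\in N,\\ 1& \text{if }S\cap S^g=1,\\ \psi(n)& \text{otherwise, for any $n\in N$ with $n^{-1}g\in C_G(R)$.}\end{cases}
\]

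The main obstacle is well-definedness of the third case. Any two valid realizers $n_1,n_2$ differ by an element of $C_N(R):=N\cap C_G(R)$, so the required claim is the following \emph{key lemma}: whenever $R=S\cap S^g$ for some $g\notin N$, one has $C_N(R)\subseteq\rho^2(S)$. Because $S$ is abelian, $S\subseteq C_G(R)\subseteq N_G(R)$ with $S$ a Sylow $p$-subgroup of $N_G(R)$, so $\rho^1(R)=[N_G(R),N_G(R)]\cdot S$; the key lemma will then follow from the stronger inclusion $C_N(R)\subseteq[N_G(R),N_G(R)]\cdot S$, which puts $C_N(R)$ inside $N\cap\rho^1(R)\subseteq\rho^2(S)$. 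I would attack this by writing $N_G(R)=C_G(R)\rtimes K$ via Schur--Zassenhaus (with $K$ a Hall $p'$-complement, available since $S\subseteq C_G(R)$ is Sylow and $N_G(R)/C_G(R)\hookrightarrow\Aut(R)$ is a $p'$-group), and exploiting the nontrivial-fusion hypothesis---which forces the $N$-orbit of $R$ on subgroups of $S$ to have size at least two---to control the image of $C_N(R)$ in the abelian $p'$-quotient of $N_G(R)$.

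Granting the key lemma, the three clauses of Definition~\ref{def:weakhomo} are routine to verify. Clause (a) holds because for $s\in S$ the element $s$ itself is a realizer and $S\subseteq\rho^2(S)\subseteq\ker\psi$; clause (b) is built into the construction; and clause (c), the multiplicativity $\chi(ab)=\chi(a)\chi(b)$ whenever $Q:=S\cap{}^aS\cap{}^{ab}S\neq 1$, is verified by choosing compatible realizers of $a$, $b$ and $ab$ on the subgroups $a^{-1}Qa$, $b^{-1}a^{-1}Qab$ and $(ab)^{-1}Q(ab)$ of $S$, and invoking the fact that $\psi$ is a genuine group homomorphism on $N$. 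Since $\chi|_N=\psi$ by the first case of the definition, $\psi$ lies in the image of $\res^G_N$, completing the proof of~(a).
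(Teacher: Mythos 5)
Your proposal diverges from the paper at the crucial step, and the divergence introduces a gap that I do not think can be closed.

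The paper's proof of the surjectivity half of~(a) (Proposition~\ref{prop:essence}) does \emph{not} define the extension $\theta$ of $\chi$ by the formula $\theta(g)=\chi(n)$ for a Burnside realizer $n\in N$. Instead, for each nontrivial $Q\le S$ it first uses Lemma~\ref{lem:Frattini} (Frattini argument applied to $H=C_G(Q)$, which contains $S$ because $S$ is abelian) to build an auxiliary homomorphism $\psi_Q\colon C_G(Q)\to k^\times$ that agrees with $\chi$ on $N\cap C_G(Q)$, and then sets $\theta(g)=\psi_{S\cap\ls gS}(c)\,\chi(n)$ for the decomposition $g=cn$, $c\in C_G(S\cap\ls gS)$, $n\in N$. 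The point of the extra factor $\psi_R(c)$ is precisely to make $\theta$ well-defined \emph{without} assuming that $\chi$ vanishes on $N\cap C_G(R)$: if $g=cn=c'n'$ are two decompositions, the ambiguity $d=c^{-1}c'=nn'^{-1}\in N\cap C_G(R)$ is cancelled because $\psi_R$ and $\chi$ agree there by construction. Your formula $\chi(g)=\psi(n)$ drops the $\psi_R(c)$ factor, so your well-definedness reduces to the ``key lemma'' $C_N(R)\subseteq\rho^2(S)$, i.e.\ $\chi|_{N\cap C_G(R)}=1$. That is a genuinely stronger assertion than anything the paper needs, and it is not true in general: one has $C_N(R)=N_{H}(S)$ for $H=C_G(R)$, and $N_H(S)/(N_H(S)\cap\dmd(H))\cong H/\dmd(H)$ by Frattini, so whenever $C_G(R)$ has a nontrivial abelian $p'$-quotient the group $C_N(R)$ strictly exceeds $N\cap\rho^1(R)$, and there is no reason for the excess to land in $\rho^2(S)$. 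In fact, if your key lemma always held then every $\psi_R$ would be the trivial homomorphism, which would make the paper's construction pointless; the whole design of Proposition~\ref{prop:essence} is to handle the case when $\chi$ is nontrivial on $N\cap C_G(R)$.

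Two secondary problems. First, your proposed proof of the key lemma invokes Schur--Zassenhaus to split $N_G(R)=C_G(R)\rtimes K$, but Schur--Zassenhaus requires $|C_G(R)|$ and $|N_G(R)/C_G(R)|$ to be coprime, which fails here since $C_G(R)$ typically contains $p'$-elements; also, $R=S\cap S^g$ with $g\notin N$ does not force the $N$-orbit of $R$ to have size $\ge 2$ (e.g.\ $R$ could be a characteristic subgroup of $S$). Second, you describe the verification of clause~(c) of Definition~\ref{def:weakhomo} as routine, but in the paper this is the longest part of the argument: it requires showing $\psi_R(\ls ad)=\psi_{R^a}(d)$ via the uniqueness clause of Lemma~\ref{lem:Frattini}, and without the auxiliary $\psi_Q$'s this identity has no analogue in your set-up. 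The derivation of~(b) and~(c) from~(a), and the easy inclusion in~(a) via Propositions~\ref{prop:normal} and~\ref{prop:rho-weak}/\ref{prop:main}, are fine; the missing idea is the family of extensions $\psi_Q$.
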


In other words, the theorem says that if the Sylow subgroup $S$ of $G$
is abelian, then the subgroup $J$ of Proposition \ref{prop:charsub}
is equal to $\rho^2(S)$ and that $K(G) \cong (N/\rho^2(S))^*$.

Note that by Proposition \ref{prop:rho-weak}, the restriction 
to $N_G(Q)$ of any weak homomorphism $\phi:G\to k^\times$
is a homomorphism having $\rho^2(Q)$ in its kernel.
Our task then is to prove a very strong converse, namely, that
any group homomorphism $\chi:N = N_G(S) \to k^\times$ 
having $\rho^2(S)$ in its kernel is the restriction to $N$
of a weak homomorphism.

We need a couple of preliminary results before beginning the 
proof. The first observation is essential to our efforts. 

\begin{lemma} \label{lem:Frattini}
Let $S$ be a Sylow $p$-subgroup of a finite group~$H$.
\begin{itemize}
\item[(a)] The inclusion map $N_H(S) \to H$ induces an isomorphism
\[
N_H(S)/ N_H(S)\cap \dmd(H) \cong H/\dmd(H) \,.
\]
\item[(b)] For any group homomorphism
$\phi: N_H(S) \to k^\times$ having $N_H(S)\cap \dmd(H)$
in its kernel, there exists a unique group homomorphism 
$\psi: H\to k^\times$ whose restriction 
to~$N_H(S)$ is equal to~$\phi$. 
\end{itemize}
\end{lemma}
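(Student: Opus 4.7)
The plan for part (a) is to invoke the Frattini argument. The key observation is that $\dmd(H)=[H,H]S$ contains $S$, and since $S$ is a Sylow $p$-subgroup of $H$ and $\dmd(H)$ is normal in $H$, the subgroup $S$ is in particular a Sylow $p$-subgroup of $\dmd(H)$. By the Frattini argument, $H=\dmd(H)\cdot N_H(S)$. The claimed isomorphism
\[
N_H(S)\bigl/\bigl(N_H(S)\cap \dmd(H)\bigr) \;\cong\; H/\dmd(H)
\]
then follows immediately from the second isomorphism theorem applied to $N_H(S)$ and $\dmd(H)$ inside $H$.

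For part (b), I would combine (a) with the elementary fact that every group homomorphism from a finite group to $k^\times$ has image in the group of $p'$-roots of unity of $k$ (since $k$ is algebraically closed of characteristic $p$), hence factors uniquely through the maximal abelian $p'$-quotient. Applied to $H$, this means that any homomorphism $\psi:H\to k^\times$ factors through $H/\dmd(H)$, so $\psi$ is uniquely determined by its restriction to $N_H(S)$ via the isomorphism in (a); this gives the uniqueness. For existence, the hypothesis on $\phi$ means that it factors through the quotient $N_H(S)/(N_H(S)\cap\dmd(H))$; transporting this via the isomorphism of (a) yields a homomorphism $H/\dmd(H)\to k^\times$, and composing with the canonical projection $H\to H/\dmd(H)$ produces a homomorphism $\psi:H\to k^\times$ which, by the naturality of the isomorphism in (a) with respect to the inclusion $N_H(S)\hookrightarrow H$, restricts to $\phi$ on $N_H(S)$.

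There is essentially no obstacle here: once one identifies $\dmd(H)$ as the kernel of the universal map from $H$ onto abelian $p'$-groups, and checks that $S$ is a Sylow $p$-subgroup of $\dmd(H)$, both parts reduce to routine applications of the Frattini argument and the second isomorphism theorem. The only point worth being careful about is to verify that the isomorphism in (a) is induced by the inclusion $N_H(S)\hookrightarrow H$, so that the extension $\psi$ constructed in (b) genuinely restricts to $\phi$ rather than to some twist of it.
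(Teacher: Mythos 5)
Your proof is correct and follows essentially the same route as the paper's: the Frattini argument gives $N_H(S)\dmd(H)=H$, the second isomorphism theorem gives (a), and (b) reduces to the observation that homomorphisms into $k^\times$ factor through the maximal abelian $p'$-quotient $H/\dmd(H)$. You have merely spelled out the intermediate steps that the paper leaves implicit.
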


\begin{proof}
By the definition of~$\dmd(H)$, the Sylow $p$-subgroup $S$ 
is contained in~$\dmd(H)$.
The Frattini argument yields $N_H(S) \dmd(H) = H$, 
proving (a). Then (b) follows from~(a).
\end{proof}

\begin{lemma} \label{lem:Burnside} (Burnside's Fusion Theorem)
Let $S$ be an abelian Sylow $p$-subgroup of~$G$ 
and suppose that $R$ is a nontrivial subgroup of~$S\cap \ls gS$.
\begin{itemize}
\item[(a)] There exists $c\in C_G(R)$ and $n\in N_G(S)$ such that $g=cn$.
\item[(b)] If $g=c'n'$ with $c'\in C_G(R)$ and $n'\in N_G(S)$, 
then there exists $d\in N_G(S)\cap C_G(R)$
such that $c'=cd$ and $n'=d^{-1}n$.
\end{itemize}
\end{lemma}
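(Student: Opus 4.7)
The plan is to deduce this from Sylow's theorem applied inside the centralizer $C_G(R)$, in the standard spirit of Burnside's fusion argument.

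The key observation for (a) is that both $S$ and $\ls{g}{S}$ lie inside $C_G(R)$. Indeed, $R\subseteq S$ and $S$ is abelian, so $S\subseteq C_G(R)$; conjugating, $\ls{g}{R}\subseteq \ls{g}{S}$, which is again abelian, so $\ls{g}{S}\subseteq C_G(\ls{g}{R})$. Because $R\subseteq \ls{g}{S}$ as well, we also have $\ls{g}{S}\subseteq C_G(R)$ (since $\ls{g}{S}$ is abelian and contains $R$). Now $|S|$ equals the full $p$-part of $|G|$, so $S$ (and therefore $\ls{g}{S}$) is a Sylow $p$-subgroup of $C_G(R)$. By Sylow's theorem inside $C_G(R)$, there exists $c\in C_G(R)$ with $\ls{c}{S}=\ls{g}{S}$, which means $c^{-1}g\in N_G(S)$. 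Setting $n=c^{-1}g$ gives $g=cn$ with $c\in C_G(R)$ and $n\in N_G(S)$, proving part~(a).

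For part (b), suppose $g=cn=c'n'$ with $c,c'\in C_G(R)$ and $n,n'\in N_G(S)$. Solving, one gets
\[
d := c^{-1}c' = n(n')^{-1}.
\]
The left-hand expression shows that $d\in C_G(R)$ (product of two elements of $C_G(R)$), while the right-hand expression shows that $d\in N_G(S)$. Hence $d\in N_G(S)\cap C_G(R)$, and by construction $c'=cd$ and $n'=d^{-1}n$, as required.

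There is no real obstacle here: the only substantive point is the double containment $S,\ls{g}{S}\subseteq C_G(R)$, which relies crucially on $S$ being abelian (so that every element of $S$ centralizes the subgroup $R\subseteq S$). Once that is in place, Sylow conjugacy in $C_G(R)$ yields the factorization, and the uniqueness assertion in (b) is a purely formal manipulation.
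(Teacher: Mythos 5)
Your proof is correct and follows essentially the same route as the paper: both observe that $S$ and $\ls{g}{S}$ are Sylow $p$-subgroups of $C_G(R)$ (using that $S$, and hence $\ls{g}{S}$, is abelian and contains $R$), apply Sylow conjugacy inside $C_G(R)$ to get $c$, set $n=c^{-1}g$, and derive (b) formally from $d=c^{-1}c'=n(n')^{-1}$. The only difference is that you spell out the containments $S,\ls{g}{S}\subseteq C_G(R)$ in more detail than the paper does.
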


\begin{proof}
Statement (a) is essentially Burnside's theorem. 
For the proof, observe that $S$ and $\ls gS$
are Sylow $p$-subgroups of~$C_G(R)$, hence 
conjugate by an element $c\in C_G(R)$.
Then $n=c^{-1}g$ normalizes~$S$. Statement (b) 
follows by defining $d=c^{-1}c'=nn'^{-1}$.
\end{proof}

The essence of the proof of Theorem~\ref{thm:main} 
is the following. 

\begin{prop} \label{prop:essence}
Suppose that a Sylow $p$-subgroup $S$ of $G$ is abelian.
Let $N = N_G(S)$. Let $\chi:N \to k^\times$ be a homomorphism
whose kernel contains~$\rho^2(S)$. Then there is a unique 
weak homomorphism $\theta: G \to k^\times$, whose restriction
to $N$ is equal to~$\chi$.
\end{prop}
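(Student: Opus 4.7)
The plan is to construct $\theta$ directly using Burnside's fusion theorem (Lemma~\ref{lem:Burnside}), after first building compatible local extensions of $\chi$ at each nontrivial $p$-subgroup of $S$. Concretely, for each nontrivial $Q \subseteq S$, since $S$ is abelian, $Q$ is normal in $S$, so $S$ is a Sylow $p$-subgroup of $N_G(Q)$ with normalizer $N \cap N_G(Q)$. The hypothesis $\rho^2(S) \subseteq \ker\chi$ gives $N \cap \dmd(N_G(Q)) \subseteq \ker\chi$; hence, by Lemma~\ref{lem:Frattini}(b), there is a unique extension $\chi_Q : N_G(Q) \to k^\times$ of $\chi|_{N \cap N_G(Q)}$. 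The crucial compatibility property, which I would establish as a separate lemma, is that for any two nontrivial $Q, Q' \subseteq S$, the characters $\chi_Q$ and $\chi_{Q'}$ coincide on $H := N_G(Q) \cap N_G(Q')$: the subgroup $S$ is a Sylow $p$-subgroup of $H$ (again using $S$ abelian), both restrictions are $p'$-valued and hence kill $\dmd(H)$, and they agree with $\chi$ on $N_H(S) = N \cap H$, so Lemma~\ref{lem:Frattini}(a) forces equality on $H$.

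With this in hand, define $\theta: G \to k^\times$ by $\theta(g) = 1$ when $S \cap \ls g S = \{1\}$, and otherwise set $\theta(g) = \chi_R(c)\chi(n)$, where $R := S \cap \ls g S$ and $g = cn$ is any Burnside decomposition with $c \in C_G(R)$ and $n \in N$. Well-definedness of the factorization follows from Lemma~\ref{lem:Burnside}(b) together with $\chi_R|_{N \cap N_G(R)} = \chi$. The compatibility lemma upgrades this to: the formula $\theta(g) = \chi_{R''}(c'')\chi(n'')$ holds for \emph{any} nontrivial $R'' \subseteq R$ and \emph{any} decomposition $g = c''n''$ with $c'' \in C_G(R''),\ n'' \in N$. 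Conditions (a) and (b) of Definition~\ref{def:weakhomo} are then immediate, the former using that $\chi|_S = 1$ automatically (since $k^\times$ is $p'$-torsion).

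The main obstacle is verifying condition (c). Given $a, b \in G$ with $R := S \cap \ls a S \cap \ls{ab}S \neq \{1\}$, write $a = c_a n_a$ and $ab = c' n'$ as Burnside decompositions with respect to $R$, and set $\gamma := c_a^{-1} c' \in C_G(R)$. Rearranging yields
\[
b = (n_a^{-1}\gamma n_a)(n_a^{-1}n') = \tilde\gamma\tilde n,
\]
where $\tilde\gamma \in C_G(R^*)$ for $R^* := n_a^{-1}Rn_a \subseteq S$ and $\tilde n \in N$. One checks that $R^* \subseteq S \cap \ls b S$, using that $\tilde\gamma$ centralizes $R^*$ and $\tilde n$ normalizes $S$. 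Hence $\theta(b) = \chi_{R^*}(\tilde\gamma)\chi(\tilde n)$, and the identity $\theta(ab) = \theta(a)\theta(b)$ reduces to $\chi_R(\gamma) = \chi_{R^*}(n_a^{-1}\gamma n_a)$. Both sides define characters of $N_G(R)$; on $N \cap N_G(R)$ the right-hand side equals $\chi(n_a^{-1}xn_a) = \chi(x)$, because $\chi$ takes values in the abelian group $k^\times$ and is therefore $N$-conjugation invariant. This class-invariance is the key conceptual input, and Lemma~\ref{lem:Frattini}(a) then forces the two characters to agree on all of $N_G(R)$. Finally, uniqueness follows by reverse engineering: condition (b) determines $\theta$ on elements with trivial Sylow intersection, while repeated use of condition (c) inside $N_G(R)$ forces $\theta|_{N_G(R)}$ to be a $p'$-valued homomorphism extending $\chi|_{N \cap N_G(R)}$, which must equal $\chi_R$ by Lemma~\ref{lem:Frattini}(b); the defining formula for $\theta(g)$ then follows from condition (c) applied to $g = cn$.
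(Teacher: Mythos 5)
Your proof is correct and follows essentially the same strategy as the paper: build local extensions of $\chi$ via Lemma~\ref{lem:Frattini}, patch them together with Burnside's fusion theorem, and verify condition (c) by reducing to the conjugation-invariance identity $\chi_R(\gamma) = \chi_{R^*}(n_a^{-1}\gamma n_a)$, proved again by uniqueness of the Frattini extension. The only cosmetic deviations are that you define the local characters on the normalizers $N_G(Q)$ rather than the centralizers $C_G(Q)$ (equivalent since $C_G(Q)\subseteq N_G(Q)$ and each Sylow-normalizer restriction determines the other by uniqueness), you isolate a slightly more general compatibility lemma for arbitrary pairs $Q,Q'$ where the paper only needs the nested case, and you supply an explicit uniqueness argument that the paper leaves implicit, relying instead on the injectivity of $\res_N^G$ from Proposition~\ref{prop:injective}.
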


\begin{proof}
Let $Q$ be a nontrivial subgroup of~$S$, and 
let $H=C_G(Q)$, the centralizer of~$Q$ in~$G$.
Since $S$ is abelian, $S\subseteq H\subseteq N_G(Q)$.
It is clear that $\dmd(H) = [H,H]S \subseteq \dmd(N_G(Q))$,
hence $\chi$ vanishes on $N_H(S) \cap     
\dmd(H) \subseteq \rho^2(S)$ by assumption.
By Lemma~\ref{lem:Frattini}, there exists a unique group homomorphism
\[
\psi_Q: H = C_G(Q) \quad \longrightarrow \quad k^\times
\]
which coincides with $\chi$ on the subgroup 
$N_H(S)= N_G(S) \cap C_G(Q) = N \cap H$.

We define $\theta:G\to k^\times$ by the following rule. 
First, we set $\theta(g)=1$ if $S\cap \ls gS=1$.
If $S\cap \ls gS\neq \{1\}$, we use 
Lemma~\ref{lem:Burnside} and write $g=cn$,
with $c\in C_G(S\cap \ls gS)$ and $n\in N_G(S)$.  Then let
\[
\theta(g)=\psi_{S\cap \ls gS}(c) \, \chi(n) \,.
\]

In order to prove that $\theta$ is well defined, 
we must consider another decomposition $g=c'n'$,
with $c'\in C_G(S\cap \ls gS)$ and $n'\in N_G(S)$, 
and show that the algorithm produces 
the same result for $\theta(g)$.
In fact, we prove more: that the algorithm 
produces the same result even if we replace $S\cap \ls gS$ 
by a proper nontrivial subgroup.  That is, assume that $R$ 
is any nontrivial subgroup of $S\cap \ls gS$ and write
$g=c'n'$, with $c'\in C_G(R)$ and $n'\in N_G(S)$. 
We claim that
\[
\psi_{S\cap \ls gS}(c) \, \chi(n) = \psi_R(c') \, \chi(n') \,.
\]

In order to prove the claim, we observe that 
$C_G(S\cap \ls gS) \subseteq C_G(R)$ 
because $R\subseteq S\cap \ls gS$.
So we have two decompositions $g=cn=c'n'$ 
with $c,c'\in C_G(R)$ and $n,n'\in N_G(S)$.
By Lemma~\ref{lem:Burnside}, there exists 
$d\in N_G(S)\cap C_G(R)$ such that $c'=cd$ and $n'=d^{-1}n$.
Thus,
\[
\psi_R(c') \, \chi(n') = \psi_R(cd) \, \chi(d^{-1}n) = 
\psi_R(c) \, \psi_R(d) \, \chi(d)^{-1} \, \chi(n) = 
\psi_R(c) \, \chi(n) \,,
\]
because $d\in N_G(S)\cap C_G(R)$ and we know 
that $\psi_R$ and $\chi$ coincide on
$N_G(S)\cap C_G(R)$. We next observe that the uniqueness of 
$\psi_{S\cap \ls gS}: C_G(S\cap \ls gS) \to k^\times$
implies that it must be equal to the 
restriction to $C_G(S\cap \ls gS)$
of the group homomorphism $\psi_R: C_G(R)\to k^\times$.
Therefore $\psi_R(c) = \psi_{S\cap \ls gS}(c)$, 
completing the proof of the claim.

Our next task is to prove that $\theta$ is a weak homomorphism. 
If $s\in S$, then $S\cap \ls sS=S$ and we have
$C_G(S)\subseteq N_G(S)$. We use the decomposition $s=1\cdot s$ 
with $1\in C_G(S)$ and $s\in N_G(S)$
and we get $\theta(s)=\chi(s)$. But $\chi$ vanishes on~$S$ because 
$S\subseteq \dmd(N)$.
Therefore $\theta(s)=1$, proving the first 
condition for a weak homomorphism.
The second condition is obvious since 
$\theta(g)=1$ if $S\cap \ls gS=1$, by definition.

Assume that $a,b\in G$ satisfy $S\cap \ls aS \cap \ls {ab}S\neq \{1\}$. 
We must show that $\theta(ab)=\theta(a)\theta(b)$. 
Let $R=S\cap \ls aS \cap \ls {ab}S$, and 
note that $R\subseteq S\cap \ls aS$.
Using what we have proved, we can write
\[
\theta(a)= \psi_R(c)\,\chi(n) \,,\quad \text{where} 
\quad  a=cn,\, c\in C_G(R), \, n\in N_G(S) \,.
\]
Next notice that $R\subseteq \ls aS\cap \ls {ab}S$, 
so that $R^a\subseteq S\cap \ls bS$. We write
\[
\theta(b) =  \psi_{R^a}(d)\,\chi(m) \,, \quad 
\text{where} \quad b=dm, \,d\in C_G(R^a), \, m\in N_G(S) \,.
\]
It follows that $\ls ab=\ls ad \, \ls am$, with 
$\ls ad\in C_G(R)$, $\ls am\in N_G(\ls aS)$. Therefore
\[
ab=\ls ab \,a= \ls ad \, \ls am \, a = 
\ls ad \,c \,c^{-1} \,a \,m =  \ls ad \,c \,n\,m \,,
\]
because $c^{-1}a=n$.
But $\ls ad \,c \in C_G(R)$, because $d\in C_G(R^a)$, 
and $nm\in N_G(S)$. So we obtain
\[
\theta(ab)= \psi_R(\ls adc) \, 
\chi(nm) = \psi_R(\ls ad)\,\psi_R(c)\,\chi(n)\,\chi(m)
= \psi_R(c)\,\chi(n)\,\psi_R(\ls ad)\,\chi(m) \,.
\]

Now we claim that $\psi_R(\ls ad)=\psi_{R^a}(d)$. 
Writing $a=cn$, we first find that
\[
\psi_R(\ls ad)= \psi_R(c\,\ls nd\,c^{-1})=
\psi_R(c)\,\psi_R(\ls nd) \, \psi_R(c^{-1}) = 
\psi_R(\ls nd) \,,
\]
because $k^\times$ is commutative. Moreover, writing 
$\conj_n(x)=nxn^{-1}$, we observe that the composite
\[
\xymatrix{
{C_G(R^a) = C_G(R^n)} \ar[r]^{\qquad\conj_n}  
& C_G(R) \ar[r]^{\;\psi_R} &  k^\times
}
\]
has a restriction to $N_G(S)\cap C_G(R^n)$ equal to
\[
\xymatrix{
N_G(S)\cap C_G(R^n) \ar[r]^{\;\conj_n}  & 
N_G(S)\cap C_G(R) \ar[r]^{\qquad\chi} &  k^\times.
}
\]
Hence, it is equal to the map
\[
\xymatrix{
N_G(S)\cap C_G(R^n) \ar[r]^{\qquad\chi} &  k^\times}
\]
because $n\in N_G(S)$ and 
$\chi(nxn^{-1})=\chi(n)\chi(x)\chi(n^{-1})=\chi(x)$ 
by commutativity of~$k^\times$.
It follows that 
$\psi_R\circ\conj_n : C_G(R^n)\to k^\times$ is the 
unique extension of $\chi : N_G(S)\cap C_G(R^n) \to k^\times$,
hence equal to the homomorphism 
$\psi_{R^n} : C_G(R^n)\to k^\times$.
In other words, 
$\psi_R(\ls nd)=\psi_R(\conj_n(d)) =\psi_{R^n}(d)$.
Finally
\[
\psi_R(\ls ad)=\psi_R(\ls nd)=\psi_{R^n}(d)=\psi_{R^a}(d) \,,
\]
as claimed.

Returning to the computation of $\theta(ab)$, we find
\[
\theta(ab)= \psi_R(c)\,\chi(n)\,\psi_R(\ls ad)\,\chi(m)
= \psi_R(c)\,\chi(n)\,\psi_{R^a}(d)\,\chi(m)
=\theta(a) \,\theta(b) \,.
\]
This completes the proof of the third 
condition for a weak homomorphism.
Thus $\theta:G\to k^\times$ is a weak homomorphism,
and we have proved the proposition. 
\end{proof}

\begin{proof}[Proof of Theorem \ref{thm:main}]
To prove (a), we recall that any weak homomorphism 
on $N$ is a homomorphism. The image under the 
restriction map $\res_{N}^G : A(G) \to A(N)$ of any
element of $A(G)$ must be a homomorphism with 
$\rho^2(S)$ in its kernel. The previous proposition
says that the restriction map must be injective and
surjective onto this subset. 

Statement (b) follows immediately from (a) and the 
injectivity of the restriction map 
(Proposition~\ref{prop:injective}).
The proof of (c), that is
the equality $\rho^2(S)=\rho^\infty(S)$,
follows from~(a) and the fact that
$\rho^\infty(S)$ is in the kernel of any weak 
homomorphism, by Theorem~\ref{thm:vanishrho}.
\end{proof}

All of the experimental evidence suggests that something
like Theorem \ref{thm:main} should be true in general. 
Hence, we suggest the following question.

\begin{conjj}
Suppose that $G$ is any finite group with Sylow
$p$-subgroup~$S$. Let $N = N_G(S)$, and let $J \subseteq N$ 
be the intersection of the kernels of all one-dimensional 
$kN$-modules $U$ such that the Green correspondent
of $U$ is an endotrivial $kG$-module. Is 
$J = \rho^\infty(S)$?
\end{conjj}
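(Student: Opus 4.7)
The plan is to follow the strategy established for abelian Sylow subgroups in Proposition~\ref{prop:essence}. The inclusion $\rho^\infty(S) \subseteq J$ is already given by Proposition~\ref{prop:main}, so the question reduces to the converse: given any group homomorphism $\chi : N \to k^\times$ with $\rho^\infty(S)$ in its kernel, one must construct a weak homomorphism $\theta : G \to k^\times$ whose restriction to $N$ equals $\chi$. By Proposition~\ref{prop:injective} and Theorem~\ref{thm:Balmer}, such an extension will provide the required element of $K(G)$.

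First, for each nontrivial subgroup $Q \subseteq S$ I would extend the restriction of $\chi$ to $N \cap N_G(Q)$ to a homomorphism $\psi_Q : N_G(Q) \to k^\times$. In the abelian case this was done on $C_G(Q)$ via Lemma~\ref{lem:Frattini} together with $\rho^2(S) \leq \ker\chi$; in general the natural target is $N_G(Q)$ itself, and one would apply Lemma~\ref{lem:Frattini} to a Sylow $p$-subgroup $T$ of $N_G(Q)$ containing $N_S(Q)$. This requires showing that $\chi$ vanishes on $N_{N_G(Q)}(T) \cap \dmd(N_G(Q))$, which is exactly the sort of statement controlled by the inductive definition of $\rho^i$: vanishing on $\rho^2(S)$ handles the first layer of commutator contributions, while higher layers of $\rho^i(S)$ are needed when one chains extensions across different subgroups $Q$ and $Q'$ (via the terms $N_G(Q) \cap \rho^{i-1}(Q')$ built into the definition). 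To define $\theta$ on an element $g \in G$ with $S \cap \ls{g}{S} \neq 1$, I would then invoke Alperin's Fusion Theorem in its Goldschmidt refinement: the conjugation isomorphism $S \cap S^g \to S \cap \ls{g}{S}$ decomposes as a composite of conjugations by elements $n_1,\dots,n_r$ lying in normalizers $N_G(Q_i)$ of essential $p$-subgroups $Q_i \subseteq S$, and correspondingly one can write $g = n_1 \cdots n_r n$ with $n \in N$. Set
\[
\theta(g) \;=\; \psi_{Q_1}(n_1) \cdots \psi_{Q_r}(n_r)\,\chi(n),
\]
and $\theta(g) = 1$ when $S \cap \ls{g}{S} = 1$, mirroring the formula $\theta(g) = \psi_{S \cap \ls{g}{S}}(c)\,\chi(n)$ of Proposition~\ref{prop:essence}.

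The main obstacle, which I expect to be severe, is proving well-definedness of $\theta$, i.e.\ independence from the chosen Alperin decomposition. Two decompositions of the same fusion differ by elementary moves corresponding to relations in the saturated fusion system $\CF_S(G)$, and compatibility of $\theta$ with every such relation is essentially the content of the conjecture. One must show that the hypothesis $\rho^\infty(S) \leq \ker\chi$ absorbs all of these relations into the kernel of $\chi$. This will almost certainly demand a delicate fusion-theoretic analysis, perhaps recasting the filtration $\{\rho^i(S)\}$ so that $\rho^\infty(S)$ is visibly generated by the commutators arising in all Alperin relations among essential subgroups of $S$; it is even conceivable that a strengthening of the filtration is required in full generality, which would explain why the conjecture is only a question. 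Once well-definedness is established, verifying the three axioms of a weak homomorphism should follow much as in Proposition~\ref{prop:essence}: condition~(a) holds because $S \leq \dmd(N)$, condition~(b) is by construction, and condition~(c) reduces to splicing Alperin decompositions for $a$ and $b$ into one for $ab$ when $S \cap \ls{a}{S} \cap \ls{ab}{S} \neq 1$.
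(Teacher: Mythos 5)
This statement is posed in the paper as an open \emph{conjecture}, not a theorem: there is no proof in the paper to compare against. The paper itself explains why the method of Theorem~\ref{thm:main} does not extend directly, namely that Burnside's fusion theorem must be replaced by Alperin's, that $\rho^2(S)\neq\rho^\infty(S)$ in general (the example $G_2(5)$ at $p=3$), and that even control of fusion by $N_G(S)$ is not sufficient without an additional Frattini-type hypothesis (Theorem~\ref{thm:fusion}). Your sketch recognizes this is a conjecture and, to its credit, identifies essentially the same programme and the same central obstacle (well-definedness of $\theta$ across different Alperin decompositions), even suggesting that a strengthened filtration might ultimately be required. So you have not proved anything, but you have correctly diagnosed why the problem is open.

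One concrete gap in the sketch, which the paper explicitly flags and which you pass over too quickly, is the extension step. You say you would ``apply Lemma~\ref{lem:Frattini} to a Sylow $p$-subgroup $T$ of $N_G(Q)$'' to extend $\chi$ to $\psi_Q : N_G(Q)\to k^\times$. But $\chi$ is only defined on $N_G(S)$, and the subgroup $N_{N_G(Q)}(T)$ appearing in Lemma~\ref{lem:Frattini} is not in general contained in $N_G(S)$, so $\chi$ is not defined where you need it. In the abelian case this worked because $S\subseteq C_G(Q)$, so $S$ is Sylow in $H=C_G(Q)$ and $N_H(S)=N_G(S)\cap H$ lies inside the domain of $\chi$; in general $N_S(Q)$ need not be Sylow in $N_G(Q)$ and $N_{N_G(Q)}(T)$ need not meet $N_G(S)$ in a useful way. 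This is precisely why Theorem~\ref{thm:fusion} imposes the hypothesis $N_H(S)\dmd(H)=H$ for $H=C_G(Q)$: the Frattini argument, which is automatic when $S$ is abelian, genuinely fails otherwise. Any honest attempt at the conjecture has to confront this extension problem before even reaching the well-definedness of $\theta$.
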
 

While it might be possible to prove such a conjecture 
by means similar to those in the proof of 
Theorem \ref{thm:main}, we should point out that at least
two difficulties arise. The first is that the fusion
theorem of Burnside does not hold in greater generality. 
It would have to be replaced with something like Alperin's 
fusion theorem, whose conditions are more complicated. 
In addition, as we see in Section \ref{sec:fusion}, even
a stringent assumption such as control of fusion by the
normalizer of the Sylow subgroup $S$ does not readily 
lead to a generalization. Other assumptions seem to be 
necessary for the proof. 

A second difficulty to generalizing Theorem \ref{thm:main}
is that the equality $\rho^2(S)=\rho^\infty(S)$ does 
not hold in general. An example is the group $G_2(5)$
for $k$ a field of characteristic~3. Computer calculations 
using MAGMA~\cite{BC} show
that $\rho^3(S) = \rho^\infty(S) = N_G(S)$, but 
$\rho^2(S)$ is a proper subgroup of~$N_G(S)$.

%%%%%%%%%%%%%  section 6  %%%%%%%%%%

\section{The cyclic case}  \label{sec:cyclic}

\noindent
If a Sylow $p$-subgroup $S$ of $G$ is cyclic, then the structure 
of $K(G)$ is determined in Theorem~3.6 of~\cite{MT}.
In this section, we show that this result can be recovered 
using Theorem~\ref{thm:main}. We prove the following. 

\begin{thm} \label{thm:cyclic}
Suppose that a Sylow $p$-subgroup $S$ of~$G$ is cyclic.
Let $Z$ be the unique subgroup of~$S$ of order~$p$.
Then $K(G)\cong K(N_G(Z)) \cong     
\big(N_G(Z)/\dmd(N_G(Z)) \big)^*$.
\end{thm}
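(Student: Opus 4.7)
The plan is to reduce the theorem to a direct application of Theorem~\ref{thm:main}(b), by computing $\rho^2(S)$ explicitly in the cyclic case and then recognizing the quotient $N_G(S)/\rho^2(S)$ via the Frattini isomorphism (Lemma~\ref{lem:Frattini}).

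First I would exploit the lattice structure of a cyclic $p$-group. Every nontrivial subgroup $Q'$ of $S$ is cyclic, hence contains $Z$ as its unique subgroup of order~$p$; in particular $Z$ is characteristic in~$Q'$. This forces the chain of inclusions
\[
N_G(S) \;\subseteq\; N_G(Q') \;\subseteq\; N_G(Z)
\]
for every nontrivial $Q' \subseteq S$. Since $S$ is a Sylow $p$-subgroup of each $N_G(Q')$ and of $N_G(Z)$, the containment $N_G(Q') \subseteq N_G(Z)$ gives $[N_G(Q'),N_G(Q')] \subseteq [N_G(Z),N_G(Z)]$ and hence
\[
\rho^1(Q') \;=\; \dmd(N_G(Q')) \;\subseteq\; \dmd(N_G(Z)) \;=\; \rho^1(Z).
\]

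Next I would pin down $\rho^2(S)$. The above shows that for every nontrivial $Q'\subseteq S$,
\[
N_G(S) \cap \rho^1(Q') \;\subseteq\; N_G(S) \cap \dmd(N_G(Z)) \;=\; N_G(S)\cap \rho^1(Z),
\]
while the reverse inclusion is achieved by taking $Q'=Z$ in the generating set defining $\rho^2(S)$. Consequently
\[
\rho^2(S) \;=\; N_G(S) \cap \dmd(N_G(Z)).
\]

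With this identification in hand, I would apply Lemma~\ref{lem:Frattini}(a) to the group $H=N_G(Z)$, which has $S$ as a Sylow $p$-subgroup. Since $N_G(S) \subseteq N_G(Z)$, we have $N_{N_G(Z)}(S)=N_G(S)$, so the Frattini isomorphism reads
\[
N_G(S)/\rho^2(S) \;=\; N_G(S)/\bigl(N_G(S)\cap \dmd(N_G(Z))\bigr) \;\cong\; N_G(Z)/\dmd(N_G(Z)).
\]
Dualizing and invoking Theorem~\ref{thm:main}(b) (which applies since $S$ is cyclic, hence abelian) yields
\[
K(G) \;\cong\; \bigl(N_G(S)/\rho^2(S)\bigr)^{\!*} \;\cong\; \bigl(N_G(Z)/\dmd(N_G(Z))\bigr)^{\!*}.
\]

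Finally, to obtain the middle isomorphism $K(G)\cong K(N_G(Z))$, I would observe that $N_G(Z)$ has the nontrivial normal $p$-subgroup~$Z$. By Proposition~\ref{prop:normal}(c) combined with Balmer's isomorphism (Theorem~\ref{thm:Balmer}), $K(N_G(Z))\cong A(N_G(Z))\cong (N_G(Z)/\dmd(N_G(Z)))^*$, matching the expression just obtained for $K(G)$. The only delicate point in the whole argument is the identification $\rho^2(S)=N_G(S)\cap\dmd(N_G(Z))$; once that is secured, the rest is a formal chain of isomorphisms, so I expect no real obstacle beyond the lattice-theoretic bookkeeping in the cyclic case.
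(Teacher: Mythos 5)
Your argument is correct and follows essentially the same route as the paper: you compute $\rho^2(S) = N_G(S)\cap \dmd(N_G(Z))$ using the fact that $Z$ lies in every nontrivial subgroup of the cyclic group~$S$, apply Lemma~\ref{lem:Frattini} with $H=N_G(Z)$, and invoke Theorem~\ref{thm:main}(b); the identification $K(N_G(Z))\cong (N_G(Z)/\dmd(N_G(Z)))^*$ via Proposition~\ref{prop:normal} is also the same. The paper merely states the key identity $\rho^2(S)=N_G(S)\cap\dmd(N_G(Z))$ more tersely, whereas you verify both inclusions explicitly; no substantive difference.
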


\begin{proof}
For any subgroup $Q$ such that $Z\subseteq Q\subseteq S$, 
we have that $N_G(Q)\subseteq N_G(Z)$.
Hence $\dmd(N_G(Q)) \subseteq \dmd(N_G(Z))$, and
\[
\rho^2(S) = N_G(S)\cap \dmd(N_G(Z)) \,.
\]
By Lemma~\ref{lem:Frattini} applied to the 
subgroup $H=N_G(Z)$, we have an isomorphism
\[
N_G(S)/\rho^2(S) = N_G(S)/N_G(S)\cap \dmd(N_G(Z))  
\cong N_G(Z)/\dmd(N_G(Z)) \,.
\]
Since $K(G)\cong \big(N_G(Z)/\rho^2(S)\big)^*$ 
by Theorem~\ref{thm:main}, we obtain
\[
K(G)\cong \big(N_G(Z)/\dmd(N_G(Z)) \big)^* \,,
\]
as required. This is also isomorphic to $K(N_G(Z))$ 
by Proposition~\ref{prop:normal}.
\end{proof}

The isomorphism $K(G) \cong K(N_G(Z))$ follows actually directly from the isomorphism $T(G) \cong T(N_G(Z))$,
which is a consequence of the fact that $N_G(Z)$ is strongly $p$-embedded in~$G$ (see Lemma~3.5 in~\cite{MT}).

%%%%%%%%%%%%  section 7  %%%%%%%%%%%%

\section{Control of fusion} \label{sec:fusion}

\noindent
Assume that the normalizer $N_G(S)$ of a 
Sylow $p$-subgroup $S$ of $G$ controls $p$-fusion.
One may wonder if Theorem~\ref{thm:main} 
still holds under this assumption.
The analysis of the proof shows that we need more, as follows.

\begin{thm} \label{thm:fusion}
Suppose that the normalizer $N = N_G(S)$ 
of a Sylow $p$-subgroup $S$ of $G$ controls $p$-fusion.
Assume, in addition, that for any subgroup $Q$ of~$S$, we have 
$N_H(S)\dmd(H) =H$
where $H=C_G(Q)$ and $N_H(S)=N_G(S)\cap H$. 
Then the following hold~:
\begin{itemize}
\item[(a)] The image of the restriction map 
$\res_{N_G(S)}^G : A(G) \to A(N_G(S))$
consists exactly of all group homomorphisms 
$N_G(S)\to k^\times$ having $\rho^2(S)$ in their kernel.
\item[(b)] $K(G)\cong A(G) \cong \big(N_G(S)/\rho^2(S) \big)^*$.
\item[(c)] $\rho^2(S)=\rho^\infty(S)$.
\end{itemize}
\end{thm}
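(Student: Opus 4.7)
The plan is to imitate the proof of Theorem~\ref{thm:main}, with the added hypothesis $N_H(S)\dmd(H) = H$ replacing the Frattini argument of Lemma~\ref{lem:Frattini} (which in the abelian case was automatic because $S \subseteq C_G(Q)$ is itself a Sylow $p$-subgroup of $H = C_G(Q)$), and with control of $p$-fusion replacing Burnside's Fusion Theorem (Lemma~\ref{lem:Burnside}). Since (b) follows from (a) by Theorem~\ref{thm:Balmer} and Proposition~\ref{prop:injective}, and (c) follows from (a) together with Theorem~\ref{thm:vanishrho}, the real task is to re-establish Proposition~\ref{prop:essence}: given a group homomorphism $\chi: N \to k^\times$ with $\rho^2(S) \subseteq \ker\chi$, produce a weak homomorphism $\theta: G \to k^\times$ whose restriction to $N$ is~$\chi$.

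For each nontrivial subgroup $Q$ of $S$, set $H = C_G(Q)$. Because $H \subseteq N_G(Q)$ and $\dmd(N_G(Q))$ is a normal subgroup of $N_G(Q)$ containing $[N_G(Q),N_G(Q)]$ together with every Sylow $p$-subgroup of $N_G(Q)$, one has $\dmd(H) \subseteq \dmd(N_G(Q))$. Consequently,
\[
N_H(S) \cap \dmd(H) \;\subseteq\; N_G(S) \cap \dmd(N_G(Q)) \;\subseteq\; \rho^2(S) \;\subseteq\; \ker\chi.
\]
The hypothesis $N_H(S)\dmd(H) = H$, combined with the second isomorphism theorem and the normality of $\dmd(H)$ in $H$, yields $N_H(S)/(N_H(S) \cap \dmd(H)) \cong H/\dmd(H)$, so $\chi|_{N_H(S)}$ extends uniquely to a homomorphism $\psi_Q : C_G(Q) \to k^\times$ vanishing on $\dmd(H)$. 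This is the substitute for Lemma~\ref{lem:Frattini} used throughout the proof of Proposition~\ref{prop:essence}.

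Next, for any $g\in G$ with $R := S \cap \ls gS \neq \{1\}$, the subgroups $R$ and $g^{-1}Rg$ both lie in~$S$. Control of $p$-fusion by $N_G(S)$ then produces $n \in N_G(S)$ realising the conjugation $\conj_{g^{-1}}|_R$, that is, $n^{-1}xn = g^{-1}xg$ for every $x \in R$; equivalently, $c := gn^{-1}$ centralises~$R$, yielding $g = cn$ with $c \in C_G(R)$ and $n \in N_G(S)$. The consistency statement of Lemma~\ref{lem:Burnside}(b) is formal and transfers verbatim: if $g = cn = c'n'$, then $d = c^{-1}c' = n(n')^{-1}$ lies in $C_G(R) \cap N_G(S)$. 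I then define $\theta(g) = 1$ if $S \cap \ls gS = \{1\}$ and $\theta(g) = \psi_R(c)\,\chi(n)$ otherwise, exactly as in Proposition~\ref{prop:essence}.

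The verification that $\theta$ is well defined and is a weak homomorphism now follows the proof of Proposition~\ref{prop:essence} line by line: the independence of $\theta(g)$ from the choice of nontrivial $R \subseteq S \cap \ls gS$ and the multiplicativity identity rely only on (i) uniqueness of $\psi_Q$ for $Q \subseteq S$, (ii) commutativity of $k^\times$, and (iii) the fact that $R^n \subseteq S$ whenever $n \in N_G(S)$ and $R \subseteq S$, so that the same uniqueness applies to $\psi_{R^n}$. Part (a) follows once Proposition~\ref{prop:rho-weak} is invoked for the reverse inclusion, and (b), (c) are then immediate. The main obstacle I anticipate is purely bookkeeping in the multiplicativity step: the key identity $\psi_R(\ls ad) = \psi_{R^a}(d)$ must be verified by reducing $R^a$ to $R^n$ via $c \in C_G(R)$ and then matching $\psi_R \circ \conj_n$ with $\psi_{R^n}$ through uniqueness, taking care that each invocation of uniqueness is for an honest subgroup of $S$, so that the hypothesis $N_H(S)\dmd(H) = H$ genuinely applies.
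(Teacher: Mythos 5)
Your proposal is correct and follows exactly the same route as the paper: the authors likewise observe that the extra hypothesis $N_H(S)\dmd(H)=H$ substitutes for the Frattini argument in Lemma~\ref{lem:Frattini}, and that control of $p$-fusion substitutes for Burnside's theorem in Lemma~\ref{lem:Burnside}, after which the proof of Proposition~\ref{prop:essence} and hence Theorem~\ref{thm:main} carries over verbatim. Your write-up merely fills in the details that the paper leaves implicit, and all of them check out.
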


\begin{proof}
The proof is exactly the same as that of 
Theorem~\ref{thm:main}, with the following observations.
The assumption on each group $H=C_G(Q)$ 
implies that the conclusions of Lemma~\ref{lem:Frattini} hold.
Thus the use of Lemma~\ref{lem:Frattini} remains valid.
More precisely, for any group homomorphism 
$\chi:N_G(S)\to k^\times$ vanishing  
on~$N_H(S)\cap \dmd(H)$,
there exists a unique group homomorphism 
$\psi_Q:C_G(S)\to k^\times$ which coincides 
with $\chi$ on the subgroup
$N_H(S)=N_G(S)\cap H$.
Here $S$ is not necessarily contained in~$H$ 
(while it is when $S$ is abelian),
so the Frattini argument cannot be applied as it was in Lemma~\ref{lem:Frattini}.
However, our assumption allows us to make the argument work.

On the other hand, the assumption on 
control of fusion means exactly
that the conclusions of Lemma~\ref{lem:Burnside} hold.
Thus, the use of Lemma~\ref{lem:Burnside} remains valid, and
the whole proof goes through.
\end{proof}

%%%%%%%%%%%  section 8  %%%%%%%%%

\section{Examples}  \label{sec:examples}

\noindent
If $H$ is a strongly $p$-embedded subgroup 
of~$G$, then any one-dimensional representation of~$H$
has a Green correspondent which is endotrivial 
(see Proposition~2.8 and Remark~2.9 in~\cite{CMN1}).
This fact was used to produce torsion endotrivial 
modules of dimension greater than one in various cases,
in particular for groups of Lie type of rank one in 
the defining characteristic (Proposition~5.2 in~\cite{CMN1})
and for groups with a cyclic Sylow $p$-subgroup 
(Lemma~3.5 in~\cite{MT}).

However, there are other cases when torsion endotrivial 
modules of dimension greater than one occur.
Several examples are given in~\cite{LM2} for various sporadic groups.
The purpose of this section is to provide two 
explicit examples for classical groups
with an abelian Sylow $p$-subgroup. 
We first start by an easy case.

\begin{exm} {\bf $PSL(2,q)$ in characteristic 2.}
Let $G=PSL(2,q)$ in characteristic~2 and assume that $q \equiv 3$ or 5 modulo~8,
so that a Sylow 2-subgroup $S$ of~$G$ is a Klein four-group.
Then $C_G(S)=S$ has index~3 in~$N_G(S)$, hence $\dmd(N_G(S)) =S$.
Any subgroup~$C$ of order~2 satisfies $N_G(C)=C_G(C)$ and
\[
S\subseteq N_G(S)\cap \dmd(N_G( C)) \subseteq 
N_G(S)\cap N_G(C) = N_G(S)\cap C_G(C) = S \,.
\]
Therefore $N_G(S)\cap \dmd(N_G(C)) =S$
and it follows that $\rho^2(S)=\rho^1(S)=S$.
By Theorem~\ref{thm:main}, $K(G)$ is the dual 
group of~$N_G(S)/S$, which is cyclic of order~3.
Hence $TT(G)=K(G)\cong \bZ/3\bZ$.
\end{exm}

For our second example, we compute the torsion part of the group
of endotrivial modules over the group $G = \PSL(3,q)$ in characteristic~3,
in the case that $q \equiv 4$ or 7 modulo~9. In this case, the 
Sylow 3-subgroup of $G$ is elementary abelian of order~9.
The point is to show that the torsion subgroup $TT(G)=K(G)$
of the group of endotrivial module is 
isomorphic to $(\bZ/2\bZ)^2$, which has three nontrivial elements. 

For notation, we use an overline to indicate the class in $G$ of an 
element in $H = \SL(3,q)$. We fix the following elements of $H$.
For notation, let $\zeta$ denote a cubed root of unity in $\bfq$. 
\[
a = \begin{bmatrix} 1 & 0 & 0  \\
		0 & \zeta & 0 \\
		0 & 0 & \zeta^2  \end{bmatrix}, \quad 
x = \begin{bmatrix} 0 & 1 & 0  \\
                0 & 0 & 1 \\
                1 & 0 & 0  \end{bmatrix}, \quad 
u = \begin{bmatrix} 1 & 1 & 1  \\
                1 & \zeta & \zeta^2 \\
                1 & \zeta^2 & \zeta  \end{bmatrix}, \quad 
v = \begin{bmatrix} \zeta & 1 & 1  \\
                \zeta & \zeta & \zeta^2 \\
                1 & \zeta & 1  \end{bmatrix}.
\]
Then it is not difficult to verify the following. 

\begin{lemma}\label{lem:generators1}
The subgroup $\langle a,x \rangle$ is a Sylow 3-subgroup of 
$H = \SL(3,q)$ and hence the Sylow 3-subgroup of $G$ is 
$S = \langle \ovl{a}, \ovl{x} \rangle$.  In addition, we have the relations~:
\begin{enumerate}
\item $u^{-1}xu = a,$
\item $u^{-1}au = x^{-1},$
\item $v^{-1}xv = a^2v,$
\item $v^{-1}av = \zeta (ax)^{-1}.$
\end{enumerate}
In particular, the elements $\ovl{u}$ and $\ovl{v}$ are in the 
normalizer of $S$, and each acts on $S$ by exchanging the four 
maximal subgroups in pairs.  The commutator $\sigma = u^{-1}v^{-1}uv$
acts on $S$ by inverting $\ovl{a}$ and~$\ovl{x}$, and hence also 
inverting every nonidentity element. 
\end{lemma}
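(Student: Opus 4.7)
The plan is to treat this as a computational checklist: first verify the four matrix identities by direct multiplication, then use them to read off the Sylow statement and the action of $\bar u, \bar v, \bar\sigma$ on $S$.

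For the Sylow claim, I would first compute $v_3(|\SL(3,q)|) = 3$ under the assumption $q \equiv 4, 7 \pmod 9$: one has $v_3(q-1) = 1$ and $v_3(q^2+q+1) = 1$ (the latter by writing $q = 1 + 3m$ with $3 \nmid m$ and reducing $q^2 + q + 1 \equiv 3 \pmod 9$). A short matrix computation gives $xax^{-1} = \zeta a$, hence $[a,x] = \zeta^{-1} I$ lies in the centre $\langle \zeta I\rangle$ of $\SL(3,q)$. So $\langle a, x\rangle$ has abelian quotient $\langle \bar a, \bar x\rangle$ in $\PSL(3,q)$, and since $\bar a$ and $\bar x$ generate independent subgroups of order $3$ (one is diagonal modulo scalars, the other a nontrivial cyclic permutation modulo scalars), $|\langle a, x\rangle| = 27$. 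This matches the $3$-part of $|\SL(3,q)|$, so $\langle a, x\rangle$ is Sylow in $H$ and $S = \langle \bar a, \bar x\rangle$ is Sylow in $G$.

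Relations $(1)$--$(4)$ are then verified by direct $3 \times 3$ matrix multiplication, using only $\zeta^3 = 1$ and $1 + \zeta + \zeta^2 = 0$. Passing to $G = \PSL(3,q)$, relations $(1)$ and $(2)$ say that $\bar u$ acts on $S$ by the automorphism $\bar x \mapsto \bar a$, $\bar a \mapsto \bar x^{-1}$; similarly, $(3)$ and $(4)$, reduced modulo scalars, give the action of $\bar v$ on $S$. Both $\bar u$ and $\bar v$ therefore lie in $N_G(S)$. Listing the four order-$3$ subgroups $\langle \bar a\rangle, \langle \bar x\rangle, \langle \overline{ax}\rangle, \langle \overline{ax^2}\rangle$ of $S$ and applying each automorphism shows that $\bar u$ and $\bar v$ each act as a product of two transpositions on these four subgroups, as claimed.

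Finally, identifying $\Aut(S) \cong \GL_2(\bF_3)$ via the basis $(\bar a, \bar x)$, I would write the $2 \times 2$ matrices representing the actions of $\bar u$ and $\bar v$ on $S$ and compute their commutator in $\GL_2(\bF_3)$. A short calculation yields $-I$, so $\bar\sigma$ inverts every element of $S$. The main obstacle is purely mechanical: correctly carrying out the matrix products in $(3)$ and $(4)$, where the entries of $v$ mix all three powers of $\zeta$; there is no conceptual difficulty beyond careful bookkeeping.
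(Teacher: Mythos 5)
Your plan---verify the four matrix identities by hand and then read off the Sylow count, the normalizing property of $\bar u,\bar v$, and the induced action on $S$---is exactly what the paper intends: the paper gives no written proof beyond the sentence ``it is not difficult to verify the following,'' and your count $v_3(|\SL(3,q)|)=3$ under $q\equiv 4,7\pmod 9$, together with the observation that $[x,a]$ is a nontrivial scalar so $|\langle a,x\rangle|=27$, is the right justification of the Sylow claim. One caveat if you actually carry out the multiplications: relation~(3) as printed is a typo; the computation gives $v^{-1}xv = a^2 x$, not $a^2 v$ (the latter is impossible since $v\notin S$). The corrected relation gives the same induced automorphism of $S$, and the commutator calculation in $\GL_2(\bF_3)$---with $\bar u$ acting by $\bigl(\begin{smallmatrix}0&1\\-1&0\end{smallmatrix}\bigr)$ and $\bar v$ by $\bigl(\begin{smallmatrix}-1&-1\\-1&1\end{smallmatrix}\bigr)$ in the basis $(\bar a,\bar x)$---indeed yields $-I$, so the rest of your argument is unaffected.
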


Now we consider the centralizers and normalizers. Let $z \in H$
be the scalar matrix with nonzero entries equal to $\zeta$. Then
$\langle z \rangle$ is the kernel of the natural homomorphism
of $H$ onto $G$. A general principle here is that if $y$ is an
element of $H$ such that $\ovl{y}$ commutes with $\ovl{x}$, then for 
some $j$, $a^jy$ is in the centralizer of $x$, and the same holds 
with $a$ and $x$ exchanged. 

It is easy to see that the centralizer of $a$ in $H$ is the Levi
subgroup $L$ of all diagonal matrices of determinant one. Then the 
normalizer of $\langle a, z \rangle$ is the normalizer of $L$ which is
generated by $L$, $x$ and the element $\sigma$, the commutator 
of $u$ and $v$.
The centralizer of $x$, which has the same order as that of $a$,
consists of all elements of the form 
\[ 
\begin{bmatrix} c & d & e \\ e & c & d \\ d & e & c \end{bmatrix}
\]
having determinant one ($c^3+d^3+e^3 - 3cde =1$). Then, the 
normalizer of $\langle x, z \rangle$ is generated by this centralizer,
$a$ and $\sigma$. 

Thus we can prove the following. 

\begin{prop}\label{prop:normalizer}
The Sylow 3-subgroup $S$ is self centralizing. Its normalizer 
is generated by $\ovl{a}, \ovl{x}, \ovl{u}, \ovl{v}$ and~$\ovl{\sigma}$.
\end{prop}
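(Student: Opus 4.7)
My plan is to handle self-centrality via a coset analysis in $H=\SL(3,q)$, and then identify the image of $N_G(S)/S$ inside $\Aut(S)\cong\GL(2,3)$ by computing the action of the listed elements.

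For self-centrality, $S$ being abelian gives $S\subseteq C_G(S)=C_G(\ovl a)\cap C_G(\ovl x)$, so only the reverse inclusion is in question. Lifting to $H$, an element $g\in H$ satisfies $\ovl g\in C_G(\ovl a)$ iff $gag^{-1}\in a\langle z\rangle$; since $C_H(a)=L$ (the diagonal torus) and $xax^{-1}=az$ by Lemma~\ref{lem:generators1}, the preimage in $H$ is exactly the coset union $L\cup xL\cup x^2L$. Imposing additionally $\ovl g\in C_G(\ovl x)$ yields, in each of the three cosets, a linear system on the diagonal entries whose solutions, under the hypothesis $v_3(q-1)=1$ forced by $q\equiv 4,7\pmod 9$, consist only of cube roots of unity (no genuine ninth root is available in~$\bfq$). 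Concretely, the three cosets contribute $\langle \ovl a\rangle$, $\ovl x\langle \ovl a\rangle$, and $\ovl x^2\langle \ovl a\rangle$ respectively, which exhaust the nine elements of~$S$, so $C_G(S)=S$.

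For the generator assertion, self-centrality gives an embedding $N_G(S)/S\hookrightarrow\Aut(S)\cong\GL(2,3)$ whose image is a $3'$-subgroup, so of order dividing $|\GL(2,3)|_{3'}=16$. Using the $\bF_3$-basis $\{\ovl x,\ovl a\}$ of~$S$, relations (1) and (2) of Lemma~\ref{lem:generators1} show that $\ovl u$ acts as $\bigl(\begin{smallmatrix}0 & 1\\ -1 & 0\end{smallmatrix}\bigr)$, while relation~(4) (complemented by a direct computation of $vxv^{-1}$ from the given matrix for~$v$) shows that $\ovl v$ acts as an order-$4$ matrix, explicitly $\bigl(\begin{smallmatrix}-1 & 1\\ 1 & 1\end{smallmatrix}\bigr)$. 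Both squares equal $-I$, and $[\ovl u,\ovl v]=\ovl\sigma$ also acts as~$-I$; consequently the subgroup of $\GL(2,3)$ generated by these images is isomorphic to the quaternion group~$Q_8$ of order~$8$, and in particular $\ovl\sigma$ lies in $\langle\ovl u,\ovl v\rangle$.

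The main obstacle is to tighten this lower bound to an equality, i.e.\ to rule out that $N_G(S)/S$ is a larger $2$-subgroup of the semi-dihedral Sylow $2$-subgroup $SD_{16}$ of~$\GL(2,3)$ properly containing~$Q_8$. My plan is to invoke the standard description of the Sylow $3$-normalizer in $\PSL(3,q)$ under our hypothesis, namely $N_G(S)\cong(\bZ/3\bZ)^2\rtimes Q_8$ of order~$72$; this can be verified either from the general theory of maximal tori and their Weyl groups in groups of Lie type (the Weyl group $S_3$ of~$A_2$, together with the cube roots of unity available in~$\bfq$ when $v_3(q-1)=1$, produces exactly~$Q_8$), or by direct matrix enumeration of candidate normalizing elements, or by reference to the small case $q=4$ where $G\cong M_{21}$. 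Combined with the lower bound of~$72$ from the explicit subgroup $\langle \ovl a,\ovl x,\ovl u,\ovl v,\ovl\sigma\rangle$, equality forces the generation claim.
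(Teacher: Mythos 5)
Your lower-bound computation is correct and cleanly set up: after $C_G(S)=S$ is established, the faithful action of $N_G(S)/S$ on $S\cong(\bZ/3\bZ)^2$ gives an embedding into $\GL(2,3)$, the image is a $2$-group, and the matrices you compute for $\ovl u,\ovl v,\ovl\sigma$ do generate a copy of $Q_8\subseteq\SL(2,3)$. The self-centrality argument is also essentially the same coset reduction the paper carries out (modulo your spurious appeal to $v_3(q-1)=1$ there: the condition obtained from $yxy^{-1}\in x\langle z\rangle$ with $y=\mathrm{diag}(r,s,t)\in L$ is $r^3=s^3=t^3=1$, which needs only cube roots, not the absence of ninth roots).

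The genuine gap is in the final paragraph. Ruling out that the image of $N_G(S)/S$ is the full semidihedral Sylow $2$-subgroup of $\GL(2,3)$ rather than its index-$2$ subgroup $Q_8$ is precisely the nontrivial content of the generation claim, and you do not prove it. Instead you assert a ``standard description'' $N_G(S)\cong(\bZ/3\bZ)^2\rtimes Q_8$ and offer three possible verification routes, none of which is carried out: the appeal to the Weyl group $S_3$ of type $A_2$ does not by itself produce $Q_8$ (an order-$6$ group cannot account for an order-$8$ quotient without a careful analysis of which automorphisms of $S$ are realized inside the torus normalizer and, beyond that, inside all of $G$); the reference to $q=4$ settles only one value of $q$; and ``direct matrix enumeration'' is exactly what the paper does, namely showing that any $\ovl y\in N_G(S)$, after multiplying by suitable powers of $\ovl u,\ovl v,\ovl\sigma,\ovl x$, reduces to the class of a diagonal matrix $\mathrm{diag}(r,s,t)$, whose conjugation action on $\ovl x$ must be $\ovl a^j\ovl x$ with $s^3=\zeta^j$; since $\zeta$ is not a cube in $\bfq^\times$ (here is where $v_3(q-1)=1$ is actually used), $j=0$ and $\ovl y$ centralizes $S$. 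Without executing one of these routes, the step from ``$Q_8$ is a lower bound'' to ``$Q_8$ is the whole image'' remains unjustified, so the proposal as written does not prove the proposition.
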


\begin{proof}
We can see that the centralizer of $S$ is generated by the 
classes $\ovl{a}, \ovl{x}$ and the classes of the intersection 
of the centralizers of $a$ and $x$ in~$H$. However, this 
intersection consists only of the elements of $\langle z \rangle$.
The elements $\ovl{u}$ and $\ovl{v}$ permute the maximal subgroups
of $S$ transitively. So suppose that $\ovl{y}$ is an element of 
the normalizer of $S$ and $y$ a premiage of $\ovl{y}$ in $H$ . 
By replacing  $y$ by its multiple with 
a power of $u$ and/or a power of $v$ we may assume that that $\ovl{y}$ 
normalizes $\langle \ovl{a} \rangle$. By replacing $\ovl{y}$
with its multiple with $\sigma$, if necessary, we may assume that 
$\ovl{y}$ centralizes~$\ovl{a}$. Multiplying $y$ by a power of $x$
if necessary, we may assume that $y$ has the form 
\[ 
\begin{bmatrix} r & 0 & 0 \\  0 & s & 0 \\   0 & 0 & t \end{bmatrix}
\]
where $rst = 1$. Thus we have that 
\[
yxy^{-1} \quad = \quad \begin{bmatrix} 0 & r^2t & 0 \\
 					0 & 0 & rs^2 \\
					st^2 & 0 & 0 \end{bmatrix}
\]
The point of this is that $\ovl{y}\ovl{x}\ovl{y}^{-1}$  must be 
one of the elements $\ovl{x}, \ovl{a}\ovl{x}$ or 
$\ovl{a}^2\ovl{x}$ and {\em cannot} be
$\ovl{x}^2, \ovl{a}\ovl{x}^2$ or~$\ovl{a}^2\ovl{x}^2$. Because 
$y$ centralizes $a$, conjugation by $\ovl{y}$ is an automorphism of order
either one or three on $S$. Order 3 is not possible. Consequently,
$\ovl{y}\ovl{x}\ovl{y}^{-1} = \ovl{x}$ and $\ovl{y}$ centralizes $S$.
This proves the proposition. 
\end{proof}

Now we are ready for the main theorem. 

\begin{thm} \label{thm:psl3}
Assume that $G = \PSL(3,q)$ where $q$ is congruent to $4$ or $7$
modulo~$9$.
\begin{itemize}
\item[(a)] Let $S$ be a Sylow 3-subgroup of $G$. Then 
$\rho^\infty(S)=\rho^1(S) = [N_G(S), N_G(S)]$.
\item[(b)] $TT(G)=K(G) \cong (\bZ/2\bZ)^2$.
\end{itemize}
\end{thm}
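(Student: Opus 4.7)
The plan is to invoke Theorem~\ref{thm:main} and reduce both assertions to an analysis of the quotient $N/S$ (for $N = N_G(S)$) together with its action on $S \cong \bF_3^2$. Since $S$ is abelian, Theorem~\ref{thm:main}(c) already gives $\rho^\infty(S) = \rho^2(S)$, so for (a) it remains to show $\rho^2(S) = \rho^1(S) = [N,N]$, and for (b) to compute the abelianization of $N$.

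First I would identify $N/S$ as a subgroup of $\Aut(S) = \GL(2,\bF_3)$ using the generators from Proposition~\ref{prop:normalizer}. From the relations in Lemma~\ref{lem:generators1}, the actions of $\ovl u$ and $\ovl v$ on $S$ in the basis $\ovl a, \ovl x$ can be written down directly, and one checks that each has order $4$ modulo $S$, with $\ovl u^2 \equiv \ovl v^2 \equiv \ovl\sigma \pmod{S}$ (both $\ovl u^2$ and $\ovl\sigma$ invert every element of $S$, and $S$ is self-centralizing by Proposition~\ref{prop:normalizer}). Combined with $[\ovl u, \ovl v] \equiv \ovl\sigma \pmod{S}$, these are the defining relations of $Q_8$, so $N/S \cong Q_8 \subset \SL(2,\bF_3)$.

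Next, since $\ovl\sigma$ inverts every element of $S$ and $S$ has exponent $3$, the identity $[\ovl\sigma, s^{-1}] = s^{-2} = s$ shows every element of $S$ is a commutator, so $S \subseteq [N,N]$. Therefore $\rho^1(S) = \dmd(N) = [N,N]\cdot S = [N,N]$. For the equality $\rho^2(S) = \rho^1(S)$, it suffices to prove that for every nontrivial $Q \subseteq S$, $N \cap \dmd(N_G(Q)) \subseteq \rho^1(S)$; the case $Q = S$ is trivial, and for $Q$ of order $3$ I would establish the stronger bound $N \cap N_G(Q) \subseteq \rho^1(S)$. An element of $N/S \cong Q_8$ normalizes $Q$ exactly when the corresponding matrix in $\GL(2,\bF_3)$ preserves the line $Q \subset \bF_3^2$; but the elements of $Q_8$ of order $4$ have characteristic polynomial $\lambda^2 + 1$, which is irreducible over $\bF_3$, so they admit no invariant line. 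Only $\pm I$ survive, with preimage $S\langle\ovl\sigma\rangle = \rho^1(S)$. This completes (a).

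For (b), Theorem~\ref{thm:main}(b) gives $K(G) \cong (N/\rho^2(S))^* = (N/[N,N])^*$, and since $S \subseteq [N,N]$ this abelianization coincides with the abelianization of $N/S \cong Q_8$, which is $(\bZ/2\bZ)^2$. Dualizing yields $K(G) \cong (\bZ/2\bZ)^2$, and Lemma~\ref{lem:K(G)}(b) identifies this with $TT(G)$ since $S \cong (\bZ/3\bZ)^2$ is elementary abelian. The main obstacle is the structural identification $N/S \cong Q_8$, which rests on careful bookkeeping of the conjugation relations from Lemma~\ref{lem:generators1}; once that is in place, the rest follows from Theorem~\ref{thm:main} and an easy observation in linear algebra over $\bF_3$.
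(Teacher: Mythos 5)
Your proof is correct and follows the same overall route as the paper: reduce via Theorem~\ref{thm:main} to computing $\rho^2(S)$ and $N/[N,N]$, and use the explicit normalizer structure from Lemma~\ref{lem:generators1} and Proposition~\ref{prop:normalizer}. The paper simply asserts the two computational facts it needs, namely that $[N,N]$ has index $4$ in $N$ and that $N\cap N_G(U)\subseteq [N,N]$ for every order-$3$ subgroup $U\subseteq S$; you supply a clean justification for both by identifying $N/S\cong Q_8\subset \SL(2,\bF_3)$ (using $\ovl u^2\equiv \ovl v^2\equiv [\ovl u,\ovl v]\equiv\ovl\sigma\pmod S$, with self-centralizing $S$ making the action faithful) and then noting that the order-$4$ elements of $Q_8$ have irreducible characteristic polynomial $\lambda^2+1$ over $\bF_3$, so only $\pm I$ can preserve a line in $\bF_3^2$. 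That linear-algebra observation is exactly what makes $N\cap N_G(U)\subseteq S\langle\ovl\sigma\rangle=[N,N]$ transparent, and the $Q_8$ identification makes the abelianization $(C_2)^2$ immediate. Two tiny bookkeeping points: relation (3) of Lemma~\ref{lem:generators1} should read $v^{-1}xv=a^2x$ (a typographical slip in the source) for the action of $\ovl v$ on $S$ to make sense, and you should observe that $\ovl u S$ and $\ovl v S$ do not commute (their commutator acts as $-I\neq I$) so that $|N/S|\geq 8$ and the $Q_8$ relations actually pin down the isomorphism type rather than a proper quotient.
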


\begin{proof}
(a) We note first that 
$\rho^1(S) = \dmd(N_G(S)) = [N_G(S), N_G(S)]$ by
definition and the fact that $[N_G(S), N_G(S)]$ has index prime to~3 in~$N_G(S)$
(namely index~4).
We also have that for any subgroup $U$ of order~$3$ in $S$,
$N_G(U) \cap N_G(S) \subseteq [N_G(S), N_G(S)]$.
Consequently, again from the definition, we have that 
$\rho^n(S) = [N_G(S), N_G(S)]$ for all $n$.

(b) Since a Sylow 3-subgroup is abelian of rank two, $TT(G)=K(G)$
is isomorphic to the dual group of 
$N_G(S)/ \rho^2(S)$, by Theorem~\ref{thm:main}.
By part~(a), $\rho^2(S)=[N_G(S), N_G(S)]$
and by Proposition~\ref{prop:normalizer}, 
$N_G(S)/[N_G(S), N_G(S)] \cong (C_2)^2$,
a Klein four-group, generated by the classes of $\ovl{u}$ and~$\ovl{v}$.
Its dual group (in additive notation) is isomorphic to $(\bZ/2\bZ)^2$.
\end{proof}  

\begin{rem}
In the case that $q=4$, the normalizer of the Sylow subgroup $S$ of 
$G$ is strongly 3-embedded, and the result of the theorem could 
be deduced from that fact. In all other cases, $N_G(S)$ is not strongly
3-embedded and it is not strongly 3-embedded in any subgroup of 
$G$ that properly contains it. 
\end{rem}

Finally, it is not difficult to perform the computations of the 
subgroups $\rho^i(Q)$ for all subgroups $Q$ of the Sylow subgroup
$S$ of $G$ on a computer using a standard computer algebra system.
From this computation, the structure of $K(G) = TT(G)$ can in many cases 
be deduced using Theorem~\ref{thm:main} or something similar. Below
are a few calculations using MAGMA~\cite{BC}. In most cases, 
only a few seconds of computing time was required. The computing 
time depends on such things as the size of the permutation 
representation of $G$ and the number of subgroups of $S$. Here 
we list only groups where $K(G)$ is not trivial. The results 
should be compared with those of \cite{LMS, LM1, LM2}. The notation
for the groups is the Atlas notation.  

\begin{align*}
&TT(G)  & Group \qquad \qquad \qquad \qquad p \\
&===== & ===== \qquad \qquad \qquad \qquad =\\
&\bZ/2\bZ &  M_{23}, Ru  \quad  \text{  in characteristic 3}\\
& &  J_2, Suz, 2Suz, 6Suz, Fi_{22}, Fi_{23} \quad  \text{  in characteristic 5}\\
&\bZ/2\bZ \times \bZ/2\bZ &
M_{11}, M_{22}, M_{23}, HS \quad  \text{  in characteristic 3}\\
&\bZ/4\bZ & 2Ru \quad  \text{  in characteristic 3}\\
& & Co_3, Sz_{32} \quad  \text{  in characteristic 5}\\
&\bZ/2\bZ \times \bZ/4\bZ & 2M_{22}, 4M_{22} \quad  \text{  in characteristic 3}\\
&\bZ/8\bZ & McL \quad  \text{  in characteristic 5}\\
&\bZ/24\bZ & 3McL \quad  \text{  in characteristic 5}\\
\end{align*}

\bigbreak
\noindent
Jon F. Carlson
\par\nobreak\noindent
Department of Mathematics,
University of Georgia,
Athens, GA 30602,
USA.
\par\nobreak\noindent {\tt jfc@math.uga.edu}

\medskip
\noindent
Jacques Th\'evenaz
\par\nobreak\noindent
Section de math\'ematiques,
EPFL,
Station 8,
CH-1015 Lausanne,
Switzerland.
\par\nobreak\noindent {\tt Jacques.Thevenaz@epfl.ch}

\end{document}